\title{Multisets in Type Theory}
\author{Håkon Robbestad Gylterud}
\newtheorem{proposition}[paragraph]{Proposition}
\newtheorem{theorem}[paragraph]{Theorem}
\newtheorem{lemma}[paragraph]{Lemma}
\theoremstyle{definition}
\newtheorem{definition}[paragraph]{Definition}
\newcounter{example}
\newenvironment{example}{
\paragraph{Example}
\addtocounter{paragraph}{1}}{\vspace{3mm}}
\renewcommand{\thesection}{\arabic{section}}
\renewcommand{\theparagraph}  {\thesection:\arabic{paragraph}}
\titleformat{\paragraph}[runin]{\normalfont\bfseries}{}{0em}{#1\ \theparagraph.}
\begin{document}

\maketitle

  \begin{abstract}
A multiset consists of elements, but the notion of a multiset is distinguished
from that of a set by carrying information of how many times each element
occurs in a given multiset. In this work we will investigate the notion of
iterative multisets, where multisets are iteratively built up from other
multisets, in the context Martin-Löf Type Theory, in the presence of
Voevodsky’s Univalence Axiom.

\cite{aczel1978} introduced a model of constructive set theory in type theory,
using a W-type quantifying over a universe, and an inductively defined
equivalence relation on it. Our investigation takes this W-type and instead
considers the identity type on it, which can be computed from the Univalence
Axiom. Our thesis is that this gives a model multisets. In order to demonstrate
this, we adapt axioms of constructive set theory to multisets, and show that
they hold for our model.
\end{abstract}

  \newpage
  \tableofcontents
  \newpage

  \section{Introduction}

The purpose of this paper is to describe a model of iterative,
transfinite multisets and to discuss a possible axiomatisation of the
model in the context of univalent Martin-Löf style type theory. Before
describing the model, we discuss existing work on multisets and their
relation to the model at hand.

Usage of multisets has a long history, both in mathematics and in
applications. In classical mathematics one models multisets inside set
theory in various ways. Here follows a brief description of three common
ways of representing multisets.

A very general definition, introduced in \cite{Rado1975}, is that a
multiset on a domain set $X$, consists of an assignment
$X → \text{Card}$, which for each element of the domain specifies the
(possibly transfinite) number of occurrences of the element in the
multiset. Often, this notion is restricted to functions $X → \N$,
which represent multisets where each element occurs finitely many times.

One can also view a multiset as a set $A$ with an equivalence relation
$R$ defined on $A$. The idea is that the elements of $A$ are the
occurrences in the multiset, and the relation $R$ specifies which
occurrences are the same. Thus the number of occurrences of $a \in A$ is
the size of the $R$-equivalence class of $a$.

A third way is to consider a multiset as a family of sets. The index set
of the family corresponds to the domain in Rado's multisets, but instead
of assigning a cardinal number, we have a set of occurrences.

These three approaches illuminate different aspects of multisets, and
even though they are formulated quite differently it is relatively easy
to pass back and forth between them. In fact they would be equivalent if
one removes the constraint that the relation in the second formulation
should be reflexive, or restricts the other two to ensure that each
element in the domain occurs at least once.

Rado's formulation reflect that elements in a multiset occurs a specific
number of times. This can be problematic in a constructive context,
where the notion of cardinality is much more nuanced. This is solved if
one takes the family-of-sets definition, which makes perfect sense
constructively, but requires more thought as to what constitutes
equality between multisets. The notion of equality between multisets is
a topic we will come back to later in this paper.

Considering a multiset as a set with equivalence relation, a setoid,
gives an interesting way to talk about the different between identical
elements and equal elements. The identity of elements in the underlying
set $A$ tells us when occurrences are identical, and the relation $R$
tells us which occurrences are equal. Since the underlying theory is set
theory, we can distinguish equal occurrences in a multiset, but not
identical occurrences.

All three notions describe what we in this paper will refer to as
``flat multisets'', as opposed to ``iterative multisets''. In a flat
multiset, the elements are taken from some domain which may not consist
of other multisets. The iterative multisets have elements which are
multisets themselves, and the collection iterative multisets is
generated in a well-founded manner.

\cite{blizard1988} develops an axiomatisation of iterative
multisets with finite occurrences. The theory is a two-sorted, first-order
theory. The two sorts $N$ and $M$, represent the natural numbers and
multisets respectively. The natural numbers are given by the Peano axioms.
Membership is interpreted as a ternary predicate $-\in_{-}-$, where the
intended interpretation of $x \in_n y$ is that $x$ occurs exactly $n$
times in $y$. The axioms for multisets are then chosen so that one can
reconstruct $ZFC$ internally as the multisets where each element occurs
at most once.

In this paper, we will take a different view on elementhood of multisets
compared to \cite{blizard1988}. Instead of a ternary relation, we will
keep the ${\in}$-relation binary and invoke the propositions-as-sets
attitude of Martin-Löf type theory.

In Martin-Löf type theory one does not generally distinguish the notion
of set from the notion of proposition. The notion of a set, as given by
its canonical elements, corresponds exactly to the notion of a
proposition as given by its canonical proofs. This leads us to give the
binary relation ${\in}$ the typing ${\in} : M → M → \set$. Thus, for
given $x,y : M$, we have that $x \in y$ is a set.
\emph{The natural interpretation is that $x \in y$ is the set of 
occurrences of $x$ in $y$}.

Taking the idea of using types to capture the number of occurrences
further, we need a notion of equivalence of type. This is where
Univalent Type Theory enters the picture. Voevodsky’s univalence
axiom expresses\footcite[gives an exposition.]{awodey2013} that an
identity between type is exactly (equivalent to) an equivalence.
For types which are mere sets this means that two mere sets are
identified if there is a bijection between them.

The model of multisets which we present in this paper is derived from a
model of constructive set theory by \cite{aczel1978}. The
multiset model can in fact be seen as a description of what Aczel's
model looks like through the eyes of type theory with the Univalence
Axiom.

  \section{Notation and background}

The following article is set with type theory as its intended metatheory.
Some results depend on the Univalence Axiom, and are marked as such.
Part of the article is formalised in Agda \footcite[in the references
contains a URL to the source code of the formalisation.]{agdalib},
in particular the more technical lemmas leading up to the extensionality
theorem and the extensionality theorem itself. However, this article is
self contained, and even the proofs for which there exists a formalisation
are here presented in usual mathematical writing.

In ways of notation we will mostly use standard type theoretical
notation, but written out as informal constructive mathematics in the
style of the homotopy type theory book\footcite{hottbook},
rather than giving formal derivations. Our notation deviate a bit from
the book in ways mentioned below.
Mostly, these deviations take us close to how type theory is written in
a formal proof system, such as Agda or Coq.

Application of functions and instantiation of dependent types are denoted
by juxtaposition\,—\,e.g. $f␣a$ or $B␣a$\,—\,leaving a small space between the
function or type and their argument.

We use $A : \set$ to denote that $A$ is a type. 

Definitions are signified by $≔$ with the type of the term often listed
on the line above the definition. Definitional equalities are denoted by $≡$.

The equality sign $=$ is used to denote various equivalence relations — each time
identified with a subscript, unless clear from the context. We will use the notation
$\Id$ for the identity type.

We follow the book\footcite{hottbook} in the definition of equivalence of types, $A ≃ B$,
homotopy of functions, $f \sim g$,
and notions such as \emph{contractible}, \emph{mere proposition}, 
\emph{mere set}, and $n$-type.
For the basic properties of these notions we refer the reader to (\cite{hottbook}).

Many proofs involve showing
equivalences of types, which we strive to demonstrate, as far as possible,
using chains of simpler type-algebra equivalences, such as
$\left(∏_{a:A}∑_{b:B␣a} C␣a␣b\right) ≃ \left(∑_{f : ∏_{a:A}B␣a}∏_{a:A}C␣a␣(f␣a)\right)$.

It is is worth noting that we will consider quantifiers, such as ∀,∃,∏ and ∑ to bind
weakly, so that for instance $∏_{x:a} P␣x → Q␣x$ disambiguates to  $∏_{x:a} (P␣x → Q␣x)$
rather than $\left(∏_{x:a} P␣x \right) → Q␣x$. We sometimes will add the parenthesis to
emphasise this.


  \section{The model}

In the section we recall Aczel's model of constructive set theory, delve into homotopy
type theory and construct a model of multiset theory.

\subsection{Aczel's model}

The idea behind the construction of Aczel's $V$ type in \cite{aczel1978} is
that, given a universe $U : \set$ with decoding type $T : U → \set$, one can
construct a setoid which captures the iteratively generated sets, where each set
has an index of its elements in $U$. An element can be listed more than once in
the index of the set. The equality relation of the setoid removes the
distinction between equal sets with different representations, making sets equal
if they have the same elements.

\begin{definition}
Given a universe $U$, with decoding function $T : U → \set$, let Aczel's $V$ be
the setoid defined as follows.

\begin{align*}
&V : \set \\
&V := W_{a : U} Ta \\
\\
&=_V : V → V → \set \\
&(\sup\ a\ f) =_V (\sup\ b\ g) := \left(∏_{i : T a} ∑_{j : T b}  (f␣i) =_V (g␣j)\right) ∧ \left( ∏_{j : T b} ∑_{i : T a} (f␣i) =_V (g␣j) \right)
\end{align*}
\end{definition}

The way to look at a canonical element $x ≡ (\sup\ a\ f) : V$ is that $a$ is a
code for the index of elements in $x$ and $f : T a → V$ picks out the elements
(i.e. sets) contained in $x$.

\paragraph{Remark.} Notice that the relation $=_V$ is $U$-small, if $U$ has
$Σ$-types and $Π$-types. That is, one can prove by W-induction that for each
pair of elements $x,y : V$, there is a code in $U$ for the type $x =_V y$. This
is important, since we want to use equality to construct indices for new sets.

\begin{definition}
Let elementhood in Aczel's V be defined as follows.

\begin{align}
&{∈} : V → V → \set \\
&x ∈ (\sup␣b␣g) = ∑_{i : Tb} x =_V (g␣i)
\end{align}
\end{definition}

\paragraph{Remark.}

Since the relation \(=_V\) is \(U\)-small, it follows that the relation
\({∈}\) is also \(U\)-small.

\cite{aczel1978} goes on to prove that the setoid \((V,=_V)\),
with the relation \({∈}\), is a model of Constructive Zermelo-Fraenkel
set theory (CZF). In this paper, we take a different path, and ask the
question: What is the nature of elements in \(V\), without taking the
quotient of \(=_V\), and instead considering the identity type on \(V\)?

As noted, a set in \(V\) may have the same element listed several times,
but the equality \(=_V\) erases the distinction between representations
that just differ by the number of times they repeat elements. However we
cannot expect the identity type to do the same. Thus, we expect that the
result will be more like multisets, possibly with the obstacle that
order of elements may play a role. As we will see, this obstacle is
overcome by the \emph{univalence axiom}.

\subsection{The identity type on W-types}

The following result is due to Nils Anders Danielsson
\footcite[only available on-line.]{danielsson2012}.
The result characterises the W-type of a type family \(B : A → \set\),
in terms of the identity type of \(A\) and \(B\), up to equivalence. The
lemma does not make use of the Univalence axiom and can be carried out
in plain Martin-Löf type theory.

A technical detail is that the proof as it stands, relies on
\(η\)-reduction. The justification for this is that we take the function
type in the W-type to be the \(Π\)-type of the logical framework, in
which \(η\)-reduction is customary \footcite{mltt}. This is
also how it is implemented in Agda. However one can carry out the proof
without appeal to the \(η\)-reduction, as \(η\)-reduction holds up to
provable equality (See page 62 of \cite{nps-prog}).

\begin{definition}
Given $A : \set$ and $B : A → \set$, and an element $x : W_AB$ we denote by
$\bar x : A$, and $\tilde x : B \bar x → W_AB$ the operations given by
$\overline {(\sup␣ a␣ f)} ≡ a$ and $\widetilde {(\sup␣ a␣ f)} ≡ f$.
\end{definition}

\begin{lemma}
\label{w-id}
For any $A : \set$ and $B : A → \set$, and all $x, y : W_A B$, there is an
equivalence

\begin{align*}
 \Id_{W_A B} ␣x␣ y ≃ ∑_{α : \Id_A ␣ \overline{x}␣\overline{y}} ␣ \Id ␣\widetilde{x}  ␣ (Bα · \widetilde{y})
\end{align*}
\end{lemma}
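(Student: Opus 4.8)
The plan is to reduce the identity type on $W_A B$ to an identity type on a $\Sigma$-type, and then apply the standard characterisation of paths in $\Sigma$-types. The key observation is that the (uncurried) constructor is an equivalence
\[
\sup : \left(\sum_{a:A} (B a \to W_A B)\right) \simeq W_A B,
\]
with inverse $x \mapsto (\overline{x}, \widetilde{x})$. Indeed, the computation rules give $\overline{\sup\, a\, f} \equiv a$ and $\widetilde{\sup\, a\, f} \equiv f$, so on the $\Sigma$-side the round trip $(a,f) \mapsto \sup\, a\, f \mapsto (\overline{\sup\, a\, f}, \widetilde{\sup\, a\, f})$ is the identity definitionally, while $\eta$-reduction on the W-type gives $\sup\, \overline x\, \widetilde x \equiv x$ on the $W$-side. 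This is precisely where the $\eta$-assumption flagged above is used; without it one still obtains $x = \sup\, \overline x\, \widetilde x$ by a one-line W-induction, yielding a propositional inverse and hence an equivalence all the same.

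Since any equivalence $e : X \simeq Y$ induces an equivalence $\mathsf{ap}_e : \Id_X\, x\, y \simeq \Id_Y\, (e\,x)\,(e\,y)$, applying this to the inverse of $\sup$ turns $\Id_{W_A B}\, x\, y$ into an identity in the $\Sigma$-type, namely $\Id\, (\overline x, \widetilde x)\, (\overline y, \widetilde y)$. I would then invoke the characterisation of the identity type of a $\Sigma$-type from \cite{hottbook}: for the family $a \mapsto (B a \to W_A B)$ this gives
\[
\Id\, (\overline x, \widetilde x)\, (\overline y, \widetilde y) \;\simeq\; \sum_{\alpha : \Id_A\, \overline x\, \overline y} \Id\, (\mathsf{transport}\,\alpha\,\widetilde x)\, \widetilde y .
\]

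All that remains is to move the transport across the identity so as to reach the asymmetric form in the statement. For each fixed $\alpha$, transporting along $\alpha$ in $a \mapsto (B a \to W_A B)$ is an equivalence of function types, so $(\mathsf{transport}\,\alpha\,\widetilde x = \widetilde y) \simeq (\widetilde x = \mathsf{transport}\,\alpha^{-1}\,\widetilde y)$; and by the computation of transport in a function type with constant codomain, $\mathsf{transport}\,\alpha^{-1}\,\widetilde y$ is just $\widetilde y$ precomposed with the coercion $B \overline x \to B \overline y$ induced by $\alpha$, namely $B\alpha \cdot \widetilde y$. Summing this fibrewise equivalence over $\alpha$ and composing the whole chain yields the stated equivalence. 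The main obstacle is the very first step — checking that $\sup$ is an equivalence with the componentwise inverse, which is exactly where $\eta$ (or its propositional surrogate) enters; everything afterwards is an assembly of standard type-algebra equivalences, the only care needed being to keep the direction of transport straight.
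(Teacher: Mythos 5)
Your proposal is correct, and it is essentially the argument the paper is pointing at: the paper itself states Lemma \ref{w-id} \emph{without} proof, attributing it to Danielsson and remarking only on the $\eta$-reduction caveat, and the standard argument being cited is exactly your decomposition\,---\,$\sup$ is an equivalence $\left(\sum_{a:A}(B\,a \to W_A B)\right) \simeq W_A B$ with componentwise inverse $x \mapsto (\overline{x},\widetilde{x})$, then $\mathsf{ap}$ of an equivalence is an equivalence, then the characterisation of identity in $\Sigma$-types, then the computation of transport in the family $a \mapsto (B\,a \to W_A B)$ to land on $\Id\,\widetilde{x}\,(B\alpha \cdot \widetilde{y})$. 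Your handling of the final transport-shuffling step (inverting the transport equivalence and computing it as precomposition with the coercion $B\,\overline{x} \to B\,\overline{y}$) is also sound; alternatively one can dispose of that step by path induction on $\alpha$, where both sides reduce to $\Id\,\widetilde{x}\,\widetilde{y}$.

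One technical quibble, which does not affect correctness because you supply the fix yourself: your first step invokes ``$\eta$-reduction on the W-type'', i.e.\ $\sup\,\overline{x}\,\widetilde{x} \equiv x$ as a definitional equality. No such rule exists in Martin-L\"of type theory or in Agda ($W$ is a positive, inductively defined type, and definitional $\eta$ is customary only for negative types such as $\Pi$ and records); the $\eta$ that the paper's remark refers to is $\eta$ for the $\Pi$-type of the logical framework, which enters when checking that $\widetilde{(\sup\,a\,f)}$ is $f$ itself rather than $\lambda b.\,f\,b$. The correct justification for your first step is precisely your stated fallback: a one-line W-induction gives the propositional equality $x = \sup\,\overline{x}\,\widetilde{x}$, which is all that is needed for $x \mapsto (\overline{x},\widetilde{x})$ to be an equivalence, and the remainder of your chain of type-algebra equivalences goes through unchanged.
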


\subsection{A model of multisets}

We will now present our model of transfinite, iterative multisets, given a
univalent universe $U : \set$ with decoding function $T : U → \set$. It consists
of a type $M$ of multisets, an equality relation $=_M$ and a relation ${∈}$,
which expresses elementhood. The type $M$ is the same $W$-type as Aczel's $V$.
The equality, however, is logically stricter than Aczel's equality, and, as we
will show, equivalent in a strong sense to the identity type of $M$.

\begin{definition}
 We define
\begin{align*}
&M:\set \text{ by }\\
&M:= W_{a : U} Ta 
\end{align*}
and
\begin{align*}
&=_M : M → M → \set \text{ by }\\
&(\sup␣ a␣ f)=_M (\sup␣ b␣ g) := ∑_{α : T a ≃ T b} ∏_{x: Ta} (fx)=_M(g ␣(α␣ x))
\end{align*}
and
\begin{align*}
&{∈} : M → M → \set \text{ by }\\
&x ∈ (\sup b␣ g) := ∑_{i : Tb} x=_M (g␣ i),
\end{align*}
\end{definition}

\paragraph{Remark:} Observe that if $U$ has $Π$-types, $Σ$-types and identity
types, then $=_M$ and $∈$ are $U$-small, just like their corresponding relations
in Aczel's $V$.

\subsection{Equality and the identity type}

\begin{theorem}
\label{id-eq-theorem}
(UA)
For each $x,y : M$, we have $(x=_M y) ≃ \left(\Id_M␣ x␣ y \right)$.
\end{theorem}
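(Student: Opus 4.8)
The plan is to prove this by well-founded induction on the W-type structure of both $x$ and $y$ simultaneously, using Danielsson's characterisation (Lemma \ref{w-id}) to unfold the identity type and the recursive definition of $=_M$ to unfold the equality relation, reducing the goal to matching these up term by term. Concretely, writing $x ≡ \sup␣a␣f$ and $y ≡ \sup␣b␣g$, the definition gives $x =_M y ≡ ∑_{α : Ta ≃ Tb} ∏_{i:Ta} (f␣i) =_M (g␣(α␣i))$, while Lemma \ref{w-id} gives $\Id_M␣x␣y ≃ ∑_{p : \Id_U␣a␣b} \Id␣f␣((Tp)·g)$, where $Tp : Ta ≃ Tb$ is the transport action of $T$ along $p$.

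The first key step is to replace the base-level identity $\Id_U␣a␣b$ by the equivalence $Ta ≃ Tb$. This is exactly where univalence enters: since $U$ is assumed to be a univalent universe, the map $\mathsf{ap}_T : (\Id_U␣a␣b) → (Ta ≃ Tb)$ is itself an equivalence (this is the statement that $U$ is univalent with respect to its decoding $T$, equivalently that $T$ is an embedding into $\set$ compatible with univalence). Transporting along this equivalence, I would rewrite the sum over $\Id_U␣a␣b$ as a sum over $α : Ta ≃ Tb$, so that $\Id_M␣x␣y ≃ ∑_{α : Ta ≃ Tb} \Id␣f␣(α·g)$, where now the transport in the fibre is taken directly along the equivalence $α$. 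The second key step is to handle the fibre: for a fixed $α$, I would show $\left(\Id␣f␣(α·g)\right) ≃ \left(∏_{i:Ta} (f␣i) =_M (g␣(α␣i))\right)$. This combines function extensionality, which turns the identity $\Id␣f␣(α·g)$ into a pointwise identity $∏_{i:Ta} \Id␣(f␣i)␣(g␣(α␣i))$ (after untangling how transport along $α$ reindexes $g$), with the inductive hypothesis, which replaces each pointwise identity $\Id␣(f␣i)␣(g␣(α␣i))$ by the equality $(f␣i) =_M (g␣(α␣i))$. Assembling the sum over $α$ of these fibrewise equivalences yields the desired equivalence.

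The main obstacle I expect is bookkeeping the transport coherences: precisely relating the transport $(Tp)·g$ appearing in Lemma \ref{w-id} to the reindexing $g ∘ α$ appearing in the definition of $=_M$, and checking that after applying univalence the equivalence $α$ obtained from $p$ really does reindex $g$ by $α␣x$ rather than by its inverse or by some twisted composite. In other words, the delicate part is verifying that the equivalence $\mathsf{ap}_T$ and the transport operation in the fibre of the W-type identity interact so that the two descriptions of "$f$ and $g$ agree after relabelling by $α$" coincide definitionally or up to a canonical identification. Since both $=_M$ and $\Id_M$ are defined by recursion on the W-type, the induction itself is structurally straightforward and well-founded; the care lies entirely in making the family of fibrewise equivalences respect the $∑$-structure uniformly in $α$, so that the total spaces are equivalent. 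Once the transport computations are pinned down, the equivalence follows by chaining the above type-algebra steps together.
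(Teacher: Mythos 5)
Your proposal is correct and is essentially the paper's own proof: W-induction, the induction hypothesis together with function extensionality to handle the fibres, univalence to identify $\Id_U\,a\,b$ with $T\,a \simeq T\,b$, and Lemma \ref{w-id} to characterise the identity type on canonical elements of the W-type. The only difference is presentational: the paper writes this as a single chain of equivalences from $(\sup\,a\,f) =_M (\sup\,b\,g)$ to $\Id_M\,(\sup\,a\,f)\,(\sup\,b\,g)$, whereas you unfold both sides separately and match them in the middle.
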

\begin{proof}
By W-induction. Assume $a,b : U$ and $f : T a → M$ and $g  : T b → M$. Then

\begin{align*}
(\sup a␣ f)=_M (\sup b␣ g) & ≡ ∑_{α : T a ≃ T b} ∏_{x: Ta} (fx) =_M (g (α x)) \\
 \text{Induction hypothesis\ \ \ \ \ \ }&≃ ∑_{α : T a ≃ T b} ∏_{x: Ta} \Id␣ (f␣x)␣ (g (α x))\\
 \text{Definition of ${\sim}$\ \ \ \ \ }&≡ ∑_{α : T a ≃ T b}  f \sim g · α\\
 \text{Extensionality\ \ \ \ \ \ \ }& ≃ ∑_{α : T a ≃ T b}  \Id␣ f␣ (g · α)\\
 \text{Univalence \ \ \ \ \ \ \ \ }& ≃ ∑_{α : a = b} \Id ␣f ␣ (g · Tα)\\
 \text{Previous lemma\ \ \ \ \ \ }& ≃ \Id␣ (\sup a␣ f)␣ (\sup b␣g)
\end{align*}
\end{proof}

The following lemma is important with respect to constructing multisets from
logical formulas, in analogy to the comprehension axiom of set theory. We assume
that our universe has $+$, $Σ$ and $Π$-types, so if we can only prove that the
base relations ${=_M}$ and ${∈}$ also live in the universe, then we can
interpret all bounded first order formulas as families of types in $U$,
indexed by some product of $M$ with it self. Thus we have the following lemma:

\begin{lemma}
$\Id_M$ is essentially $U$-small, in the sense that for every $x,y : M$ there is
an code $ι␣ x␣ y : U$ such that $T(ι␣ x␣ y) ≃ \Id_M␣ x␣ y$.
\end{lemma}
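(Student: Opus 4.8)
The plan is to proceed by well-founded W-induction on both $x$ and $y$ simultaneously, mirroring the structure of the previous theorem. Writing $x ≡ \sup a␣ f$ and $y ≡ \sup b␣ g$, the induction hypothesis gives, for each pair $i : Ta$ and $j : Tb$, a code $ι␣(f␣i)␣(g␣j) : U$ together with an equivalence $T(ι␣(f␣i)␣(g␣j)) ≃ \Id_M␣(f␣i)␣(g␣j)$. The goal is to assemble these into a single code for $\Id_M␣ x␣ y$. By Theorem \ref{id-eq-theorem} it suffices to find a code for $x =_M y$, since $(x=_M y) ≃ \Id_M␣x␣y$ and essential smallness is preserved under equivalence; working with $=_M$ is more convenient because its definition is already built from the type-formers we are assuming $U$ to be closed under.

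Next I would unfold the definition of $=_M$, which reads
\begin{align*}
(\sup a␣ f)=_M (\sup b␣ g) &≡ ∑_{α : T a ≃ T b} ∏_{x : Ta} (f␣x) =_M (g␣(α␣x)).
\end{align*}
The inner proposition $(f␣x) =_M (g␣(α␣x))$ is, by the induction hypothesis combined with Theorem \ref{id-eq-theorem}, equivalent to the decoding of a code in $U$. Since $U$ is closed under $Π$ and $Σ$, the $∏_{x : Ta}$ and the outer $∑$ over the equivalence type $Ta ≃ Tb$ can be absorbed into $U$ provided that the index types $Ta$, $Tb$ and the equivalence type $Ta ≃ Tb$ themselves have codes in $U$. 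The index types are decodings $T a$ and $T b$, so they are small by construction. The equivalence type $Ta ≃ Tb$ unfolds to a $Σ$ of a function type $∏_{x:Ta} Tb$ together with the contractibility-of-fibres data, all of which are built from $Π$, $Σ$ and $\Id$ on small types, hence codable in $U$ under our closure assumptions.

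The main obstacle I anticipate is handling the equivalence type $Ta ≃ Tb$ cleanly. Being an equivalence is a structure layered on top of a function $Ta → Tb$, and its full definition (e.g. via \texttt{isEquiv}) involves identity types and $Σ$-types over the ambient universe; one must check carefully that the particular formulation used stays within the fragment that $U$ is closed under, rather than silently requiring a universe-level $\Id$. This is where I would lean on the earlier remark establishing that $=_M$ is $U$-small: since $x =_M y$ is already asserted to live in $U$ (given $Π$, $Σ$ and $\Id$ in $U$), the cleanest route is to reuse that fact directly — define $ι␣x␣y$ to be the code witnessing the $U$-smallness of $x =_M y$, and then invoke Theorem \ref{id-eq-theorem} to transport the equivalence from $x =_M y$ to $\Id_M␣x␣y$. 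In that case the induction is essentially already packaged inside the smallness of $=_M$, and the lemma follows by composing $T(ι␣x␣y) ≃ (x =_M y) ≃ \Id_M␣x␣y$.
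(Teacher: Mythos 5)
Your proposal is correct and, in its final form, is exactly the argument the paper intends: the lemma follows by composing the $U$-smallness of $=_M$ (the earlier remark, established by W-induction using closure of $U$ under $Π$, $Σ$ and $\Id$, noting that $Ta ≃ Tb$ is built from these on small types) with the equivalence $(x =_M y) ≃ \Id_M␣x␣y$ of Theorem \ref{id-eq-theorem}. Your worry about a universe-level identity type is rightly dismissed, since the definition of $=_M$ quantifies over the equivalence type $Ta ≃ Tb$ rather than $\Id_U␣a␣b$, so everything stays in the small fragment.
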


\subsection{Extensionality}
\label{multiset-extensionality-section}

In set theory, the axiom of extensionality expresses that two sets are
considered equal if they have the same elements. More precisely, two sets $x$
and $y$ are equal if for any $z$ we have that $z ∈ x$ iff $z ∈ y$. This
formulation of extensionality fails for multisets, but a very similar
extensionality axiom may be formulated.

\textbf{The principle of extensionality for multisets:} \emph{Two multisets $x$
and $y$ are considered equal if for any $z$, the number of occurrences of $z$ in
$x$ and the number of occurrences of $z$ in $y$ are in bijective correspondence
(in our symbolism: $z ∈ x ≃ z ∈ y$).}

We will now prove a strong version of this principle for our model. The crucial
part of this is summarised in the following lemmas, concerning the fibres of
functions.

\begin{definition}
Given a function $f : A → B$  we define $\fibre{f} : B → \set$
by $\fibre{f␣b} ≔ ∑_{a:A} \Id␣(f␣a)␣b$
\end{definition}

\begin{lemma}
\label{fibre-fun}
Given function extensionality, for any $A,B,C : \set$, and functions $f : A → C$
and $g : B → C$, the following equivalence holds:

\begin{align}
\left(∑_{α : A → B} g ∘ α \sim f \right) ≃
\left(∏_{c : C} \fibre{f␣c} → \fibre{g␣c} \right)
\end{align}

\end{lemma}

\begin{proof}
 We define the maps γ and δ as follows:

\begin{align*}
&γ :\left(∑_{α : A → B} g ∘ α \sim f \right)  → \left(∏_{c : C} \fibre{f␣c} → \fibre{g␣c} \right) \\
&γ␣(α , σ)␣c␣(a , p) ≔ (σ_a , σ_a·p)
\end{align*}

\begin{align*}
&δ : \left(∏_{c : C} \fibre{f␣c} → \fibre{g␣c} \right) → \left(∑_{α : A → B} g ∘ α \sim f \right) \\
&δ␣F ≔ (λa. π₀ (F␣ (f␣ a) (a , \refl_a)) , λa.π₁ (F␣(f␣ a)␣ (a , \refl_a)))
\end{align*}

Unfolding the definitions shows that $δ␣(γ␣(α , σ)) ≡ (α , σ)$ (up to
η-reduction). \Id-induction on the fibres of $f$ shows that $γ␣(δ␣F) \sim F$. Thus,
by function extensionality, we have the desired equivalence.
\end{proof}

\begin{lemma}
\label{fibre-equiv}
Given function extensionality, for any $A,B,C : \set$, and functions $f : A → C$
and $g : B → C$, the following equivalence holds:

\begin{align}
\left(∑_{α : A ≃ B} g ∘ α \sim f \right) ≃
\left(∏_{c : C} \fibre{f␣c} ≃ \fibre{g␣c} \right)
\end{align}

\end{lemma}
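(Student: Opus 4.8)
The plan is to build this equivalence on top of Lemma~\ref{fibre-fun} rather than reprove everything from scratch. The key observation is that the left-hand side here, $∑_{α : A ≃ B} g ∘ α \sim f$, differs from the left-hand side of Lemma~\ref{fibre-fun} only in that $α$ ranges over equivalences $A ≃ B$ instead of arbitrary functions $A → B$. Since an equivalence is a function together with a proof that it is an equivalence (and being an equivalence is a mere proposition), I would first reassociate the dependent sum so that the ``$g ∘ α \sim f$'' component sits next to the bare function $α$, isolating the ``$\mathsf{isEquiv}\,α$'' component. Concretely, I would establish
\begin{align*}
\left(∑_{α : A ≃ B} g ∘ α \sim f\right) ≃ ∑_{(α , σ) : ∑_{α : A → B} g ∘ α \sim f} \mathsf{isEquiv}\,α,
\end{align*}
which is just a rearrangement of nested $Σ$-types together with the fact that the equivalence-structure depends only on the underlying function $α$.

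Next I would transport this extra $\mathsf{isEquiv}$ predicate across the equivalence $γ$ of Lemma~\ref{fibre-fun}. Because $γ$ sends a pair $(α , σ)$ to a family of fibrewise maps, and because $α$ being an equivalence is logically equivalent to each fibrewise map $\fibre{f␣c} → \fibre{g␣c}$ being an equivalence, the predicate $\mathsf{isEquiv}\,α$ should correspond, under $γ$, to the predicate ``$F␣c$ is an equivalence for every $c$''. The heart of this correspondence is the standard fact that a map is an equivalence precisely when all of its fibres are contractible, rephrased here through the fibrewise description of $α$ coming from the map $γ$. Transporting the predicate then yields
\begin{align*}
∑_{(α , σ) : ∑_{α : A → B} g ∘ α \sim f} \mathsf{isEquiv}\,α ≃ ∑_{F : ∏_{c : C} \fibre{f␣c} → \fibre{g␣c}} ∏_{c : C} \mathsf{isEquiv}\,(F␣c),
\end{align*}
after which collecting the $F$ and the pointwise equivalence proofs back together gives exactly $∏_{c : C} \fibre{f␣c} ≃ \fibre{g␣c}$, using that $\mathsf{isEquiv}$ commutes with $∏$ over $c$.

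I expect the main obstacle to be proving that $\mathsf{isEquiv}\,α$ matches up correctly with the fibrewise equivalence condition under the concrete map $γ$ from the previous lemma. The clean conceptual reason is the theorem that $α : A → B$ is an equivalence if and only if the induced map on fibres $\fibre{(g∘α)␣c} → \fibre{g␣c}$ is an equivalence for each $c$; but verifying that the particular $γ$ defined by $γ␣(α , σ)␣c␣(a , p) ≔ (σ_a , σ_a·p)$ realises precisely this induced map requires unfolding the definitions and checking the fibre computation carefully. Since both $\mathsf{isEquiv}\,α$ and the pointwise equivalence condition are mere propositions, I only need a logical equivalence between them over the fixed $(α,σ)$, which simplifies matters considerably and lets me avoid tracking higher coherence data. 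Everything else reduces to the standard type-algebra manipulations of $Σ$ and $Π$ and to function extensionality, which is assumed in the hypothesis.
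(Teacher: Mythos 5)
Your proposal is correct, and at the level of overall strategy it is the same as the paper's: both proofs restrict the equivalence of Lemma~\ref{fibre-fun} to the equivalence components, using that being an equivalence is a mere proposition (a $(-1)$-type), so that only a two-way logical implication between $\mathsf{isEquiv}\,\alpha$ and ``every fibre map is an equivalence'' remains to be supplied; your opening $\Sigma$-reassociation and closing $\Pi$--$\Sigma$ distribution are exactly what the paper's phrase ``the resulting restriction of \ref{fibre-fun} to equivalences is again an equivalence'' performs silently. The genuine divergence is in how that central implication is discharged. The paper does it by explicit construction, exploiting the $f$/$g$ symmetry of Lemma~\ref{fibre-fun}: writing $\gamma'$, $\delta'$ for the same maps with $f$ and $g$ interchanged, it exhibits $\gamma'(\alpha^{-1},\sigma')$ as an inverse of each fibre map $\gamma(\alpha,\sigma)\,c$, and $\pi_0(\delta'(F^{-1}))$ as an inverse of $\pi_0(\delta\,F)$, verifying both by $\Id$-induction on fibres. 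You instead recognise $\gamma(\alpha,\sigma)$ as the canonical fibrewise map induced by the homotopy-commutative triangle and invoke the standard theorem that such a map is fibrewise an equivalence iff $\alpha$ is (via the total map $\sum_{c:C}\fibre{f\,c} \to \sum_{c:C}\fibre{g\,c}$ and two-out-of-three, using $A \simeq \sum_{c:C}\fibre{f\,c}$ and $B \simeq \sum_{c:C}\fibre{g\,c}$). This is sound, and your observation that mere-propositionality reduces the burden to a logical equivalence is exactly right; but the whole weight of your argument rests on the verification you yourself flag, namely that the concrete $\gamma$ of Lemma~\ref{fibre-fun} agrees up to homotopy with the canonical induced map --- a short check (both composites send $a$ to $\alpha\,a$, essentially by reflexivity), but without it the standard theorem speaks about a different map. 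In exchange, your route reuses library-level machinery and avoids hand-crafting inverses, whereas the paper's stays self-contained and checkable by direct path induction at the cost of more explicit bookkeeping.
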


\begin{proof}

The proof goes by showing that the equivalence constructed in \ref{fibre-fun}
preserves equivalences. Since being an equivalence is a $(-1)$-type, the
resulting restriction of \ref{fibre-fun} to equivalences is again an
equivalence.

Let γ and δ be as in \ref{fibre-fun}, and denote by γ' and δ' the same
construction, but with $f$ and $g$ having exchanged roles. First step is to show
that for all $α : A → B$ and $σ : g ∘ α \sim f$, if $α$ is an equivalence, then for
every $c : C$ the function $γ␣(α,σ)␣c : \fibre{f␣ c} → \fibre{g␣ c}$ is an
equivalence. Let $F_c≔γ␣(α,σ)␣c$. We construct the inverse
$F_c^{-1} ≔ γ'␣(α^{-1},σ')$,
where $σ' : f ∘ α^{-1} \sim g$ is the proof obtained by reversing
$σα^{-1} : g ∘ α ∘ α^{-1} \sim f ∘ α^{-1}$ and composing with the proof that $g ∘ α
∘ α^{-1} \sim g$. That $F_c^{-1}$ is indeed an inverse of $F_c$ can be verified by
\Id-induction on the fibres of $f$ and $g$ respectively.

We then show that for all $F$ such that $F␣c : \fibre{f␣ c} → \fibre{g␣ c}$ is
an equivalence for all $c : C$, the function $π₀ (δ␣F) : A → B$ is an
equivalence. Let $α ≔ π₀ (δ␣F)$. Its inverse is given by
$α^{-1} ≔ π₀ (δ' (F^{-1})$,
and the fact that it is an inverse of $α$ stems from the fact
that for any $a : A$ and $c : C$ and $h : \Id␣ (f␣a)␣ c$ we have that the
transport of $(a , \refl_{f a}) : \fibre{f␣ (f␣a)}$ along $h$ is $(a , h)$, and
likewise for $g$.

\end{proof}

\begin{theorem}
\label{multiset-extensionality}
(UA)
Given $x,y : M$, the following equivalence holds.

\begin{align}
 \left(x =_M y \right) ≃ ∏_{z : M} \left( z ∈ x ≃ z ∈ y \right)
\end{align}
\end{theorem}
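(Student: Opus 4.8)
The plan is to reduce to canonical forms and then chain together the two equivalences already established: Theorem~\ref{id-eq-theorem}, which relates $=_M$ to $\Id_M$, and Lemma~\ref{fibre-equiv}, the fibrewise characterisation of sums over equivalences. By $W$-elimination on both $x$ and $y$ it suffices to treat the case $x \equiv \sup\ a\ f$ and $y \equiv \sup\ b\ g$ with $f : T a \to M$ and $g : T b \to M$; crucially, no induction hypothesis on subterms is required here, since all the recursive content is supplied by Theorem~\ref{id-eq-theorem}. Unfolding the definition of $=_M$, the left-hand side becomes
\[
  \sum_{\alpha : T a \simeq T b} \prod_{w : T a} (f\ w) =_M (g\ (\alpha\ w)).
\]

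First I would rewrite this left-hand side. Applying Theorem~\ref{id-eq-theorem} under the product replaces each $(f\ w) =_M (g\ (\alpha\ w))$ by $\Id\ (f\ w)\ (g\ (\alpha\ w))$, and, using symmetry of the identity type pointwise, the resulting product is exactly the homotopy $g \circ \alpha \sim f$. Hence the left-hand side is equivalent to $\sum_{\alpha : T a \simeq T b} g \circ \alpha \sim f$, which is precisely the domain of Lemma~\ref{fibre-equiv} instantiated at $A \equiv T a$, $B \equiv T b$, and $C \equiv M$. Applying that lemma yields
\[
  \Bigl(\sum_{\alpha : T a \simeq T b} g \circ \alpha \sim f\Bigr) \simeq \prod_{z : M} \bigl(\fibre{f\ z} \simeq \fibre{g\ z}\bigr).
\]

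It then remains to identify the fibres with membership. Unfolding, $\fibre{f\ z} \equiv \sum_{i : T a} \Id\ (f\ i)\ z$, whereas $z \in \sup\ a\ f \equiv \sum_{i : T a} z =_M (f\ i)$; these are equivalent by Theorem~\ref{id-eq-theorem} followed by symmetry of $\Id$, and likewise for $g$. Transporting this fibrewise equivalence through the product over $z : M$ and through the $\simeq$ on each side gives
\[
  \prod_{z : M} \bigl(\fibre{f\ z} \simeq \fibre{g\ z}\bigr) \simeq \prod_{z : M} \bigl((z \in \sup\ a\ f) \simeq (z \in \sup\ b\ g)\bigr),
\]
and composing the three displayed equivalences completes the argument.

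The genuinely delicate part is bookkeeping rather than conceptual. One must track the orientation of identity proofs carefully: Lemma~\ref{fibre-equiv} is phrased with the homotopy $g \circ \alpha \sim f$, whereas the definition of $=_M$ produces $\Id\ (f\ w)\ (g\ (\alpha\ w))$, and the fibre $\sum_i \Id\ (f\ i)\ z$ has its identity oriented opposite to the membership $\sum_i z =_M (f\ i)$. Each mismatch is resolved by the symmetry equivalence $\Id\ u\ v \simeq \Id\ v\ u$, but these must be inserted consistently so that the overall composite remains coherent. As with Theorem~\ref{id-eq-theorem}, the only appeal to univalence is confined to invoking that theorem, so the present proof needs no further use of it.
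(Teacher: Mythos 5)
Your proposal is correct and follows essentially the same route as the paper's own proof: both reduce $x =_M y$ (for canonical $\sup\ a\ f$, $\sup\ b\ g$) via Theorem~\ref{id-eq-theorem} and pointwise symmetry to $\sum_{\alpha : Ta \simeq Tb} g \circ \alpha \sim f$, identify $z \in \sup\ a\ f$ with $\fibre{f\ z}$ using Theorem~\ref{id-eq-theorem} again, and conclude by Lemma~\ref{fibre-equiv}. The paper merely states this reformulation in two sentences, whereas you spell out the bookkeeping (case analysis, orientation of identity proofs, congruence under $\prod$ and $\simeq$) that the paper leaves implicit.
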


\begin{proof}

From Theorem \ref{id-eq-theorem} we deduce that
$x ∈ (\sup ␣A␣ f) ≃ \fibre{x␣f}$.
This allows us to reformulate the above equivalence to be an
instance of Lemma \ref{fibre-equiv}.

\end{proof}

  \section{Multiset constructions}

Aczel's $V$ is a model of CZF. To mirror this we look at axioms of constructive
set theory, and attempt to find corresponding axioms for multisets. The main
observation is that definite axioms\footnote{An axiom is \emph{definite} if any set
it claims existence of is characterised uniquely.} can be systematically
changed to axioms which makes
sense for multisets, by carefully strengthening logical equivalence, $↔$, to
equivalence in type theory $≃$. A feature of this conversion is that for the
axioms below, we can retain the constructions from Aczel's $V$ when we prove that
the changed axioms hold for $M$.

The axioms we will have a look at are

\begin{itemize}
\item
  Extensionality
\item
  Restricted separation
\item
  Union Replacement
\item
  Pairing \& singletons
\item
  Infinity.
\item
  Exponentiation / Fullness
\item
  Collection.
\end{itemize}

\subsection{Restricted separation}
The axiom of restricted separation says that we can select subsets by use of
formulas, as long as they are bounded. For multisets, this corresponds to
that we may multiply the number of occurrences by the family of sets represented
by the formula, as long as the family is $U$-small.

\begin{align*}
\mathtt{(RSEP)}& & ∀ x ∃ u ∀ z \left(z ∈ u ↔ \left( z ∈ x ∧ P \right) \right)
\end{align*}
where $P$ is a restricted formula where $u$ does not occur freely in $P$.

The formulation of \texttt{RSEP} in first order logic can be translated into
type theory, given our domain $M$, replacing ↔ with ≃.

\begin{proposition}
\begin{align*}
\mathtt{(M-RSEP)}& &∏_{x : M} ∑_{u : M} ∏_{z : M} \left(z ∈ u ≃ \left( z ∈ x ∧ T␣(P␣z) \right) \right),
\end{align*}
where $P : M → U$, is a $U$-small family.
\end{proposition}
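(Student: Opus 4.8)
The plan is to construct the witness multiset $u$ explicitly by mimicking Aczel's separation construction, then verify the required equivalence directly. Given $x : M$, write $x \equiv \sup a\ f$ with $a : U$ and $f : Ta \to M$. The natural candidate for $u$ is the multiset whose index type codes the pairs consisting of an index $i : Ta$ together with a proof in $T(P(f\ i))$, with the element-picking function being $f$ composed with the first projection. Concretely, I would take the index code to be a $U$-element $c$ with $Tc \simeq \sum_{i : Ta} T(P(f\ i))$, which exists because $U$ is closed under $\Sigma$-types, and define $u \coloneqq \sup c\ (f \circ \pi_0)$, where $\pi_0$ discards the $T(P(f\ i))$ component.

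First I would unfold the definition of $\in$ for $u$. By definition, $z \in u \equiv \sum_{p : Tc} z =_M (f \circ \pi_0)\ p$, which after transporting along the equivalence $Tc \simeq \sum_{i : Ta} T(P(f\ i))$ becomes $\sum_{i : Ta} \sum_{q : T(P(f\ i))} z =_M (f\ i)$. Next I would use associativity and commutativity of $\Sigma$-types (type-algebra equivalences of the kind the paper emphasises) to rearrange this to $\sum_{i : Ta} \left(z =_M (f\ i)\right) \times T(P(f\ i))$. The aim is to match this against the target $z \in x \wedge T(P\ z)$, where $z \in x \equiv \sum_{i : Ta} z =_M (f\ i)$.

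The key step, and the main obstacle, is bridging the gap between $T(P(f\ i))$, which appears inside the sum over indices of $x$, and $T(P\ z)$, which appears in the target and refers to $z$ rather than to the listed element $f\ i$. The resolution is that in the summand we have available a proof $z =_M (f\ i)$, and by Theorem \ref{id-eq-theorem} this equality is equivalent to an identity $\Id_M\ z\ (f\ i)$; transporting along this identity gives $T(P\ z) \simeq T(P(f\ i))$, since $P : M \to U$ respects identity and $T$ is functorial. I would therefore perform this transport fibrewise to replace each $T(P(f\ i))$ by $T(P\ z)$, after which the factor $T(P\ z)$ no longer depends on $i$ and can be pulled out of the sum, yielding $\left(\sum_{i : Ta} z =_M (f\ i)\right) \times T(P\ z) \equiv (z \in x) \wedge T(P\ z)$. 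Assembling these equivalences into a single chain completes the proof; the only genuine care required is ensuring the transport in the bridging step is carried out coherently, which is where univalence (via Theorem \ref{id-eq-theorem}) does the essential work.
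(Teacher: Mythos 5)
Your proof is correct and follows essentially the same route as the paper: the paper defines $\operatorname{Sep}␣P␣x := \sup␣\left(∑_{i : T␣\bar x} P␣(\tilde x␣i)\right)␣(\tilde x ∘ π₀)$, which is exactly your witness, and then verifies the equivalence by the same chain of $Σ$-type rearrangements, including the bridging step where $T␣(P␣(\tilde x␣i))$ is replaced by $T␣(P␣z)$ using the available proof $\tilde x␣i =_M z$. Your explicit appeal to Theorem \ref{id-eq-theorem} to turn that $=_M$ proof into an identity before transporting is precisely the (implicit) justification for that step in the paper's proof.
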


\begin{proof}
Define
\begin{align}
&\operatorname{Sep} : (M → U) → M → M \\
&\operatorname{Sep}␣ P␣ x := \sup␣ \left(∑_{i : T␣\bar x} P␣(\tilde x␣ i)\right)␣ \left(\tilde x ∘ π₀\right)
\end{align}

We must show that for all $x$ and $P$, that for every $z$ we have
$z ∈ \operatorname{Sep}␣ P␣ x ≃ z ∈ x ∧ T␣(P␣z)$.

We have:
\begin{align}
z ∈ \operatorname{Sep}␣ P␣ x
    &≡ ∑_{p : ∑_{i : T \bar x} T␣(P␣(\tilde x␣i))} \tilde x (π₀␣ p) =_M z \\
    &≃ ∑_{i : T␣\bar x} ∑_{q : T␣(P␣(\tilde x␣i))} \tilde x (π₀␣ (i,q)) =_M z \\
    &≡ ∑_{i : T␣\bar x} ∑_{q : T␣(P␣(\tilde x␣i))} \tilde x␣i =_M z \\
    &≡ ∑_{i : T␣\bar x} \left(T␣(P␣(\tilde x␣ i)) ∧ \tilde x␣i =_M z \right)\\
    &≃ ∑_{i : T␣\bar x} \left(\tilde x␣i =_M z ∧ T␣(P␣(\tilde x␣i)\right)) \\
    &≃ ∑_{i : T␣\bar x} \left(\tilde x␣i =_M z ∧ T␣(P␣z)\right) \\
    &≃ \left(∑_{i : T␣\bar x}\tilde x␣i =_M z\right) ∧ T␣(P␣z) \\
    &≡ z ∈ x  ∧ T␣(P␣z)
\end{align}

\end{proof}

\subsection{Union-replacement}

In \cite{aczelrathjen}, the authors introduce the axiom of Union-Replacement.
We use this axiom instead of separate union and replacement axioms as it seems a
more natural construction to use. For multisets it says that if we have a family
of multisets, indexed by a multiset, we can take their multiset union.

\begin{align*}
\mathtt{(UR)}& &∀ a␣ \left( ∀ x ∈ a␣ ∃ b␣ ∀ y␣ \left( y ∈ b ↔ Q(x,y) \right) → ∃ c␣ ∀ y␣ \left( y ∈ c ↔ ∃ x ∈ a␣ Q(x,y) \right) \right)
\end{align*}

Rendering this in type theory and applying the translation to multisets we get:

\begin{proposition}
\begin{align*}
\mathtt{(M-UR)}& &∏_{a : M} \left(∏_{i : T\bar a} ∑_{b:M}␣ ∏_{y:M} \left( y ∈ b ≃ Q␣(\tilde a␣i)␣y \right) \right)\\ &&  → ∑_{c : M} ∏_{y:M} \left( y ∈ c ≃ ∑_{ i : T\bar a} Q(\tilde a␣ i)␣y \right)
\end{align*}
where $Q : V → V → Set$ is any relation.
\end{proposition}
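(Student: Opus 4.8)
The plan is to construct the witness multiset $c$ explicitly, guided by the hypothesis. The hypothesis gives, for each index $i : T\bar a$, a multiset $b_i : M$ together with a proof $\beta_i : \prod_{y:M}(y \in b_i \simeq Q\,(\tilde a\,i)\,y)$. First I would use the projections of the hypothesis to extract the function $i \mapsto b_i$ and then form $c$ by superposition: take the index type to be $\sum_{i : T\bar a} T\overline{b_i}$ and let the branching function send a pair $(i,j)$ to $\widetilde{b_i}\,j$. In symbols, writing $h$ for the hypothesis, the candidate is
\begin{align*}
c ≔ \sup\ \left(∑_{i : T\bar a} T\,\overline{\pi_0(h\,i)}\right)\ \left(\lambda (i,j).\ \widetilde{\pi_0(h\,i)}\,j\right).
\end{align*}
The intuition is that $c$ collects all the elements of all the $b_i$, with their multiplicities added, which is exactly the multiset union indexed by $a$.

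Next I would verify the required equivalence $y \in c \simeq \sum_{i:T\bar a} Q\,(\tilde a\,i)\,y$ for an arbitrary $y : M$ by a chain of type-algebra equivalences, in the same style as the Restricted separation proof above. Unfolding $y \in c$ using the definition of ${∈}$ gives $\sum_{p : \sum_{i} T\overline{b_i}} \widetilde{b_i}(\pi_0 p) =_M y$. I would then apply the associativity/currying equivalence $\left(\sum_{p:\sum_i P\,i} R\,p\right) \simeq \sum_{i}\sum_{q:P\,i} R\,(i,q)$ to pull the outer index $i$ to the front, obtaining $\sum_{i:T\bar a}\sum_{j:T\overline{b_i}} \widetilde{b_i}\,j =_M y$. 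The inner sum $\sum_{j} \widetilde{b_i}\,j =_M y$ is definitionally $y \in b_i$, so this becomes $\sum_{i:T\bar a}(y \in b_i)$. Finally, applying the hypothesis equivalence $\beta_i\,y : (y \in b_i) \simeq Q\,(\tilde a\,i)\,y$ fibrewise — valid since an equivalence in each fibre of a $\sum$ induces an equivalence of the total $\sum$ — yields $\sum_{i:T\bar a} Q\,(\tilde a\,i)\,y$, as required.

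The main obstacle I anticipate is bookkeeping around the dependency of $b_i$ on $i$ through the hypothesis, rather than any conceptual difficulty. Because $b_i$ is obtained as $\pi_0$ of a $\sum$-type and its branching data as further projections, the definition of $c$ must thread these projections carefully so that $T\overline{b_i}$ and $\widetilde{b_i}$ refer to the same extracted multiset; writing the construction cleanly (perhaps by first naming the extracted family $b : T\bar a → M$ and the proof family $\beta$) will keep the indices aligned. A secondary point requiring care is the fibrewise application of the hypothesis: I must confirm that replacing each summand of a $\sum_{i}$ by an equivalent type is itself an equivalence, which follows from the standard fact that $\sum$ respects fibrewise equivalences (so no appeal beyond the type-algebra toolkit already in use is needed). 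The construction mirrors Aczel's union construction, so I expect the witness to coincide with the set-theoretic one and the proof to be essentially the verification that the occurrence-counting works out.
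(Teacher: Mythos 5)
Your proposal is correct and follows essentially the same route as the paper: the paper defines $\operatorname{UnionRep}\,a\,f := \sup\,\left(\sum_{i : T\bar a}\overline{f\,i}\right)\left(\lambda p.\,\widetilde{f\,(\pi_0\,p)}\,(\pi_1\,p)\right)$ with $f$ extracted from the hypothesis, and verifies the membership equivalence by the identical chain (unfold ${\in}$, curry the $\Sigma$, recognise $y \in f\,i$ definitionally, apply the hypothesis fibrewise). The only blemish is a notational slip in your unfolding step, where $\widetilde{b_i}(\pi_0 p)$ should read $\widetilde{b_{\pi_0 p}}\,(\pi_1 p)$; your definition of $c$ and the subsequent currying step show you meant the right thing.
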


\begin{proof}

We define
\begin{align}
&\operatorname{UnionRep} : (a : M) → (\bar a → M) → M \\
&\operatorname{UnionRep}␣ a␣ f := \sup␣ \left(∑_{i : T \bar a} \overline{(f␣ i)}\right)␣ \left(λ p.\widetilde {f␣(π₀␣p)}␣ (π₁␣p)\right)
\end{align}

Let us fix $a : M$. Then, from the assumption of \\
$∏_{i : T\bar a} ∑_{b:M}␣ ∏_{y:M} \left( y ∈ b ≃ Q␣(\tilde a␣i)␣y \right)$,
we can extract $f : T\bar a → M$ such that for all $i : T\bar a$ and all $y:M$
we get $y ∈ (f␣i) ≃ Q␣(\tilde a␣i)␣y $. What we then need, is to show that for
every $y:M$ we have
$y ∈ (\operatorname{UnionRep}␣a␣f) ≃ ∑_{ i∈ T\bar a} Q(\tilde a␣ i)␣y $.

We have

\begin{align}
y ∈ (\operatorname{UnionRep}␣a␣f) 
    &≡ ∑_{p : ∑_{i : T\bar a}\overline{f␣i}} \widetilde {f␣(π₀␣p)}␣ (π₁␣p) =_M y \\
    &≃ ∑_{i : T\bar a} ∑_{j : T \overline{f␣ i}} \widetilde {(f␣i)}␣ j =_M y \\
    &≡ ∑_{i : T\bar a} y ∈ (f␣i) \\
    &≃ ∑_{i : T\bar a} Q(\tilde a␣ i)␣y
\end{align}

\end{proof}

\subsection{Singletons}

In set theory, singletons are usually constructed by pairing an element with itself.
For multisets defining singletons from pairs would not work, as the resulting
multiset would contain the element twice, not once. We therefore will prove that
our model has singletons.

If we were to have a singleton axiom in set theory, it would look like:

\begin{align*}
\mathtt{(SING)}& &∀ a␣ ∃ b␣ ∀ z␣\left( z ∈ b ↔ z = a \right)
\end{align*}
Which for our multisets becomes:

\begin{proposition}
\begin{align*}
\mathtt{(M-SING)}& &∏_{a : M}␣∑_{b:M}␣ ∏_{z:M}␣\left( z ∈ b ≃ z =_M a \right)
\end{align*}
\end{proposition}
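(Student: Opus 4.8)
The plan is to exhibit an explicit multiset $b$ which is the singleton containing $a$, and then verify the required equivalence by a short chain of type-algebra steps, exactly in the style of the preceding \texttt{M-RSEP} and \texttt{M-UR} proofs. The natural choice is to take the index type to be a single point, so I would define
\begin{align*}
&\operatorname{Sing} : M → M \\
&\operatorname{Sing}␣ a := \sup␣ c␣ (λ\_.\, a),
\end{align*}
where $c : U$ is a code for the unit type, i.e. $T␣c ≃ 1$. This requires the standing assumption that the universe $U$ is closed under enough type formers to contain a contractible type; since the excerpt already assumes $U$ has $+$, $Σ$ and $Π$-types (and identity types), one obtains a unit code as, say, $\Id_{Ta}$ on a fixed point, or I would simply add that $U$ is closed under the unit type. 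The element of $b$ picked out by the unique index is $a$ itself.

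First I would fix $a : M$ and set $b ≔ \operatorname{Sing}␣ a$. Then I would compute, for arbitrary $z : M$,
\begin{align*}
z ∈ \operatorname{Sing}␣ a
    &≡ ∑_{i : T␣c} (λ\_.\,a)␣i =_M z \\
    &≃ ∑_{i : 1} a =_M z \\
    &≃ a =_M z \\
    &≃ z =_M a.
\end{align*}
The first line is just unfolding the definition of ${∈}$; the second uses $T␣c ≃ 1$; the third uses that $∑_{i:1} P␣i ≃ P␣\ast$ because summing over a contractible type is trivial; and the last step is symmetry of $=_M$, which is an equivalence (one can check it is its own inverse, or appeal to Theorem \ref{id-eq-theorem} to transport symmetry of $\Id$). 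Assembling these gives the component $∏_{z:M}(z ∈ b ≃ z =_M a)$, and pairing it with $b$ produces the required inhabitant of $∑_{b:M}∏_{z:M}(z ∈ b ≃ z =_M a)$.

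I do not expect any deep obstacle here; the proposition is definite in the sense of the paper, and the construction is forced. The only points needing care are bookkeeping ones. The main thing to get right is the availability of a unit code $c : U$ with $T␣c ≃ 1$: this is where I would have to make explicit which closure assumption on $U$ I am invoking, rather than silently assuming a unit type lives in the universe. A secondary subtlety is the symmetry step: since $=_M$ is genuinely a relation valued in types (not propositions), I should confirm that $\operatorname{sym} : (a =_M z) → (z =_M a)$ is an equivalence and not merely a logical implication; this follows either from the explicit symmetry construction on the W-type equality being involutive, or more cleanly by invoking Theorem \ref{id-eq-theorem} to replace $=_M$ by $\Id_M$ throughout, where symmetry is an equivalence by $\Id$-induction. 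With those two details pinned down, the remaining calculations are the routine contractibility and $Σ$-simplification equivalences already used repeatedly above.
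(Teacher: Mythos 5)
Your proposal is correct and takes essentially the same approach as the paper, which defines $\operatorname{Sing}\:a := \sup\:1\:(\lambda i.\,a)$ and verifies the same short chain of equivalences $z \in \operatorname{Sing}\:a \equiv \sum_{i:1} a =_M z \simeq (a =_M z) \simeq (z =_M a)$. The only differences are bookkeeping: you make explicit the need for a unit-type code in $U$ and the fact that symmetry of $=_M$ must be an equivalence rather than a mere implication, both of which the paper leaves implicit.
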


\begin{proof}
We define
\begin{align*}
&\operatorname{Sing} : M → M \\
&\operatorname{Sing}␣ a := \sup␣ 1␣ \left(λ i.a\right)
\end{align*}

and prove that for every $a,z : M$ we have
$\left(z ∈ \operatorname{Sing}␣ a \right) ≃ \left(z =_M a\right)$.

\begin{align}
z ∈ \operatorname{Sing}␣a &≡ ∑_{i : 1} \left(λ i. a\right)␣i =_M z\\
                          &≡ ∑_{i : 1} a =_M z\\
                          &≡ 1 ∧ (a =_M z)\\
                          &≃ z =_M a
\end{align}
\end{proof}

\emph{Notation:} We will from now on use the notation $\{a\} := \operatorname{Sing}␣a$.

\subsection{Pairing}

The axiom of pairing,
\begin{align*}
\mathtt{(PAIR)}& &∀ a␣ ∀ b␣ ∃ c␣ ∀ z␣\left( z ∈ b ↔ \left(z = a ∨ z = b\right)\right),
\end{align*}
becomes

\begin{proposition}
\begin{align*}
\mathtt{(M-PAIR)}& &∏_{a : M}␣∏_{b:M}␣∑_{c:M}␣ ∏_{z:M}␣\left( z ∈ c ≃ \left( z =_M a ∨ z =_M b\right)\right)
\end{align*}
\end{proposition}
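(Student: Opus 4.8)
The plan is to mirror the singleton construction, replacing the unit index by a two-element index type. I would define the pairing operation by
\begin{align*}
&\operatorname{Pair} : M → M → M \\
&\operatorname{Pair}␣ a␣ b := \sup␣ (1 + 1)␣ h_{a,b},
\end{align*}
where $h_{a,b} : T(1+1) → M$ sends the left point to $a$ and the right point to $b$, defined by case analysis on $T(1+1) ≃ 1 + 1$. Since the universe is assumed closed under $+$-types and already contains the code $1$ used for singletons, the index $1 + 1$ is again a code in $U$, so $\operatorname{Pair}␣ a␣ b$ is a genuine element of $M$. It then remains to prove that for all $a,b,z : M$ we have $z ∈ \operatorname{Pair}␣ a␣ b ≃ (z =_M a) ∨ (z =_M b)$.

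I would establish this by a chain of type-algebra equivalences, exactly in the style of the previous propositions. Unfolding the definition of ${∈}$ gives $z ∈ \operatorname{Pair}␣ a␣ b ≡ ∑_{i : T(1+1)} h_{a,b}␣i =_M z$. The central step is to distribute the $∑$ over the coproduct, via the standard equivalence $∑_{i : A + B} P␣i ≃ \left(∑_{i:A} P␣(\operatorname{inl}␣i)\right) + \left(∑_{i:B} P␣(\operatorname{inr}␣i)\right)$. Applied here this yields $\left(∑_{i:1} a =_M z\right) + \left(∑_{i:1} b =_M z\right)$, and collapsing each unit sum as in the singleton proof reduces it to $(a =_M z) + (b =_M z)$. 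Finally, symmetry of $=_M$ — available since Theorem \ref{id-eq-theorem} identifies $=_M$ with the symmetric identity type $\Id_M$, and already used implicitly in the singleton proof — rewrites this as $(z =_M a) + (z =_M b)$.

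The only genuine point of care, which I would flag as the main subtlety rather than a true obstacle, is the reading of the disjunction $∨$. For the equivalence to hold in the strong, untruncated sense the paper is after, $∨$ must be interpreted as the plain coproduct $+$, in keeping with the paper's treatment of $∧$ as a Cartesian product in \texttt{M-RSEP}. Under that reading the final line of the chain is exactly $(z =_M a) ∨ (z =_M b)$ and the proof is complete. This is also the reading that correctly captures the multiset nature of pairing: when $a$ and $b$ coincide, $\operatorname{Pair}␣ a␣ b$ records two occurrences of that element, matching the earlier observation that pairing $a$ with itself is not a singleton — a propositionally truncated $∨$ would conflate these and break the equivalence. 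The remaining work is routine bookkeeping: keeping the left index aligned with the summand $z =_M a$ and the right index with $z =_M b$ through the distribution and symmetry steps.
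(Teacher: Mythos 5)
Your proof is correct and is essentially the paper's own argument: the paper likewise defines $\operatorname{Pair}␣a␣b := \sup␣2␣(p␣a␣b)$ with $p$ the case-analysis map sending the two points of $2$ to $a$ and $b$, distributes the $\Sigma$ over the two-point index, and finishes by symmetry of $=_M$. Your flag about reading $∨$ as the untruncated coproduct $+$ is also the paper's intended reading, as confirmed by its subsequent ``doubleton'' example where $\{∅,∅\} ∈ \{\{∅,∅\}\} ≃ 2$.
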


\begin{proof}
First, define

\begin{align}
&p : M → M → 2 → M\\
&p␣ a␣ b␣ (l␣*) := a \\
&p␣ a␣ b␣ (r␣*) := b
\end{align}

Then, let us define
\begin{align*}
&\operatorname{Pair} : M → M → M \\
&\operatorname{Pair}␣ a␣ b:= \sup␣ 2␣ \left(p␣a␣b\right)
\end{align*}

It remains to show that for all $a,b,z ∈ M$ we have
$z ∈ (\operatorname{Pair}␣a␣b) ≃ \left(z =_M a ∨ z =_M b \right)$.

\begin{align}
z ∈ (\operatorname{Pair}␣a␣b) &≡ ∑_{i : 2} p␣i =_M z\\
                                &≃ p␣(l␣*) =_M z ∨  p␣(r␣*) =_M z \\
                                &≡ a =_M z ∨ b =_M z \\
                                &≃ z =_M a ∨ z =_M b
\end{align}
\end{proof}

Notation: We will from now on use the notation $\{a,b\} := \operatorname{Pair}␣a␣b$.

\begin{lemma}
\label{tuple-equality}
For all $a,b,a',b' : M$,
\begin{align}
\{a\} =_M \{a'\} &≃ a =_M a' \\
\{ a,b \} =_M \{ a',b' \} &≃ \left( (a = a' ∧ b=b') ∨ (a = b' ∧ b = a'\right).
\end{align}
\end{lemma}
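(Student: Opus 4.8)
The plan is to handle both equivalences uniformly: unfold the definition of $=_M$ on the relevant $\sup$-terms, and then reduce the resulting $∑$ over self-equivalences of the index type by analysing that index type — the unit type $1$ for singletons and the two-element type $2$ for pairs. The whole content of the lemma lies in counting and identifying these self-equivalences, after which the statement falls out of routine type-algebra.

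For the first equivalence, unfolding $=_M$ gives $\{a\} =_M \{a'\} ≡ ∑_{α : 1 ≃ 1} ∏_{x : 1} (a =_M a')$, since both index functions $λi.a$ and $λi.a'$ are constant. Because $1$ is contractible, $∏_{x:1}(a =_M a') ≃ (a =_M a')$, and $1 ≃ 1$ is itself contractible (equivalent to $1$, using only function extensionality). Hence $∑_{α : 1 ≃ 1}(a =_M a') ≃ (1 ≃ 1) ∧ (a =_M a') ≃ 1 ∧ (a =_M a') ≃ (a =_M a')$, as required.

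For the second equivalence, unfolding gives $\{a,b\} =_M \{a',b'\} ≡ ∑_{α : 2 ≃ 2} ∏_{x : 2} (p\,a\,b\,x) =_M (p\,a'\,b'\,(α\,x))$. The key step is the equivalence $(2 ≃ 2) ≃ 2$, realised by the map sending $α$ to $α\,(l\,*)$: an automorphism of $2$ is determined by the image of $l\,*$, since the other value is forced, and the two possibilities are exactly the identity $\mathrm{id}$ and the swap $σ$. Splitting the $∑$ along this equivalence exhibits $\{a,b\} =_M \{a',b'\}$ as the disjoint sum of the two branches $∏_{x:2}(p\,a\,b\,x) =_M (p\,a'\,b'\,x)$ and $∏_{x:2}(p\,a\,b\,x) =_M (p\,a'\,b'\,(σ\,x))$. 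Applying $∏_{x:2} P\,x ≃ P\,(l\,*) ∧ P\,(r\,*)$ together with the defining clauses of $p$, the first branch becomes $(a =_M a') ∧ (b =_M b')$ and the second $(a =_M b') ∧ (b =_M a')$, which together give the stated disjunction $∨$.

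The main obstacle is the identification $(2 ≃ 2) ≃ 2$, together with the confirmation that its two points are precisely $\mathrm{id}$ and $σ$, so that the split of the $∑$ lands on exactly the two intended conjunctions rather than some twisted recombination. This rests on function extensionality and on the fact that being an equivalence is a mere proposition, which is what lets an automorphism of $2$ be pinned down by a single value. Everything else — computing $∏$ out of $1$ and $2$, and distributing $∑$ over the decidable index via $∑_{k:2} Q\,k ≃ Q\,(l\,*) ∨ Q\,(r\,*)$ — is routine type-algebra of the kind already used in this section, and no appeal to univalence is needed.
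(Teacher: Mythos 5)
The paper states this lemma without any proof, so there is no in-paper argument to compare against; judged on its own, your proof is correct and is the natural one. The first equivalence is exactly the contractibility computation you give, and for the second, the pivot $(2 \simeq 2) \simeq 2$ — with the two points identified as the identity and the swap via evaluation at $l\,*$ — is precisely the right key fact: it follows from function extensionality (to pin down the underlying maps, and because being an equivalence is a mere proposition), and injectivity plus decidable case analysis on $2$ forces the second value, as you say. Re-indexing the $\sum$ along this equivalence and computing $\prod_{x:2}$ as a binary conjunction then yields $\bigl((a =_M a') \wedge (b =_M b')\bigr) + \bigl((a =_M b') \wedge (b =_M a')\bigr)$, which is the stated disjunction, and your observation that univalence is not needed is also correct. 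As a sanity check, your result is consistent with the paper's doubleton example: for $a = b = a' = b' = \emptyset$ both sides reduce to $2$, since $\emptyset =_M \emptyset$ is contractible. The steps you use also mirror the computations the paper does carry out nearby (the proofs of \texttt{M-SING} and \texttt{M-PAIR} split $\sum_{i:1}$ and $\sum_{i:2}$ in the same way), so your proof fills the omitted argument in the style the author intended.
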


\paragraph{Example} \label{doubleton} \textbf{The singleton with two elements}

Observe that $(\{∅,∅\} = \{∅,∅\}) ≃ 2$. This leads to the perhaps surprising fact
that

\begin{align}
\left(\{∅,∅\} ∈ \{\{∅,∅\}\}\right) ≃ 2.
\end{align}
This might leave us feeling a bit uneasy, as this is supposed to be a singleton,
not a ``doubleton'', but we will see later (Example
\ref{singleton-example}) how to construct a multiset in which
$\{∅,∅\}$ occurs but once, and that this construction is somewhat like a
quotient of the singleton. For the time being we accept this slight anomaly as a
consequence of our rules.

\paragraph{Remark:} Using singletons, pairs and union we can construct any
finite tupling
of elements of $M$. As we see from the binary case (\ref{tuple-equality}),
the induced mapping $M^n → M$ is not an embedding.

\subsection{Ordered Pairs}

In set theory there are many equivalent ways to encode ordered pairs from
unordered pairs. The most common one, the Kuratowski encoding, defines
$\langle a,b \rangle = \{\{a\},\{a,b\}\}$. It satisfies the characteristic
property, that for all $a,b,a',b'$,

\begin{align}
\langle a,b \rangle = \langle a',b' \rangle ↔ \left(a = a' ∧ b = b'\right)
\end{align}

To understand ordered pairs of multisets, we should therefore require that for all
$a,b,a',b' : M$

\begin{align}
\langle a,b \rangle = \langle a',b' \rangle ≃ \left(a =_M a' ∧ b =_M b'\right)
\end{align}

However, this is not satisfied by mimicking the Kuratowski encoding. We can see
this by letting $a = b = a' = b = ∅$. Given this, we can calculate that
$\{\{a\},\{a,b\}\} =_M \{\{a'\},\{a',b'\}\} ≃ 2$ while
$\left(a =_M a' ∧ b =_M b'\right) ≃ 1$.

In stead of the Kuratowski encoding, we use the older definition of 
\cite{wiener1914}, $\langle a,b \rangle = \{\{\{a\},∅\},\{\{b\}\}\}$,
which harmonises with the multiset version of the characteristic property.

\begin{proof}
Observe that $\{\{a\},∅\}≠_M\{\{b\}\}$ and $\{a\}≠_M ∅$. So we get that

\begin{align}
\langle a,b \rangle =_M \langle a',b' \rangle 
   &≡ \{\{\{a\},∅\},\{\{b\}\}\} =_M \{\{\{a'\},∅\},\{\{b'\}\}\} \\
   &≃ \{\{a\},∅\} =_M \{\{a'\},∅\} ∧ \{\{b\}\} =_M \{\{b'\}\} \\
   &≃ (\{a\} =_M\{a'\} ∧ ∅ =_M ∅) ∧ (\{b\} =_M \{b'\}) \\
   &≃ a =_M a' ∧ b =_M b'
\end{align}
\end{proof}

\subsection{Cartesian products}

We obtain the cartesian product of two multisets by nesting
$\operatorname{UnionRep}$ around pairing.

\begin{definition}
Given $a,b: M$ define
\begin{align}
a × b := \operatorname{UnionRep}␣ a␣ (λ i.\operatorname{UnionRep}␣ b␣ (λ j. \langle \tilde a␣ i, \tilde b␣ j \rangle) )
\end{align}
\end{definition}
Each pairing can occur multiple times in the cartesian product. To be precise $\langle x,y \rangle ∈ \left(a× b\right) ≃ \left(x ∈ a\right) × \left(y ∈ b\right)$.

\subsection{Functions}
\label{multiset-functions}

There are several choices one could make as to what constitutes a function
between multisets. Just like in set theory, where a function between sets is it
self a set\,—\,namely a set of pairs\,—\,we should like functions between
multisets themselves to be multisets. Therefore, a given pair
$\langle x, y \rangle$ cannot occur an unbounded number of times in a function,
as we then would have problems collecting functions into exponential multisets.

The weakest notion of a function between two multisets is just a map of
occurrences. We will refer to this as a “multiset operation”. Often one consider
the stricter notion which sends equal occurrences to equal occurrences. This
notion we will denote by “multiset function”. As we will now see, we can express
both in our model.

In the model, the notion of a multiset operation between $\sup␣A␣f$ and
$\sup␣B␣g$ corresponds exactly to a map $φ : A → B$, and the stricter notion
adds the requirement that if $f␣a =_M f␣a'$ then $g␣(φ␣a) =_M g␣(φ␣a')$. The
stricter notion corresponds to functions in Aczel’s $V$, but the weaker notion is
equivalent to the stricter notion in the case of $\sup␣A␣f$ and $\sup␣B␣g$ being
sets, in the sense of $A$ and $B$ being of type level $0$ and $f$ and $g$ being
injections. Therefore, we can consider both an extension of the notion of
function to multisets.

The question we will now entertain is: How to capture these two notions in the
kind of formulas we have so far considered for other axioms? Starting with the
operations, we remind our selves that, for iterative sets, a function
$f : A → B$ is a subset of $A×B$, such that the projection down to $A$ is a
bijection. Having equivalences available in our language, we can use the
fibrewise equivalence lemma\footnote{Lemma \ref{fibre-equiv}} to describe the corresponding
situation for multisets.

\begin{definition}
We define the weak notion of a multiset function as follows.
\begin{align*}
\operatorname{Operation} : M →& M → M → Set \\
\operatorname{Operation}_{a␣b}␣f ≔ &\left(∏_z␣z ∈ f → ∑_x␣∑_y␣z=\langle x , y \rangle \right) \\ 
                                ∧ &\left(∏_x␣x ∈ a ≃ ∑_y \langle x , y \rangle ∈ f \right) \\
                                ∧ &\left(∏_y␣y ∈ b ← ∑_x \langle x , y \rangle ∈ f \right) 
\end{align*}
\end{definition}

Observe that weakening the ${≃}$ to ${↔}$ does not give the usual definition of
a function for sets, but rather that of a total binary relation. Total binary
relations form a set in classical set theory, but in CZF this is weakened to the
subset collection axiom which states that there is a set of total relations in
which every total relation has a refinement. We will later prove that the
collection of multiset operations form a multiset, and this should be seen a
form of subset collection / fullness.

\begin{definition}
\label{operation-definition}
We define the notion of a multiset function as follows.
\begin{align*}
&\operatorname{Function} : M → M → M → Set \\
&\operatorname{Function}_{a␣b}␣f ≔ \operatorname{Operation}_{a␣b}␣f ∧ ∏_{x,x',y,y'}␣&&\left(\langle x , y\rangle ∈ f ∧ \langle x' , y' \rangle ∈ f \right) \\ &&& → x = x' → y = y'
\end{align*}
\end{definition}

\subsection{Fullness, subset collection and operations}

In constructive set theory the axiom of fullness states that for each pair of
sets $a$,$b$ there is a set of total relations from $a$ to $b$ such that any
total relation has a restriction to these. The equivalent (relative to the rest
of the axioms of CZF) axiom of subset collection is a variation of fullness
which avoids the complication of using pairs to encode relations.

\begin{align*}
\mathtt{(SUB-COLL)} & &&
∀a,b∃u∀v (∀x∈a∃y∈b Q(x,y) \\
&&& → ∃z∈u (∀x∈a∃y∈z Q(x,y) ∧ ∀y∈z∃x∈a Q(x,y)))
\end{align*}

An unfortunate feature of the subset collection axiom is that it states the
existence of certain sets without defining them uniquely. Classically, the sets
of functions would satisfy the property of fullness. In fact, the requirement
that the set of functions satisfying the fullness property is equivalent to the
axiom of choice. This raises the question of what the constructive nature of this
set really is.

Some insight on the matter can be found by studying Aczel’s model of CZF in type
theory. There the underlying type of the subset collection set between
$\sup␣A␣f$ and $\sup␣B␣g$ is the function type $A → B$. In other words, the
subset collection sets are sets of operations. However, the first order language
of set theory is extensional, and thus unable to exactly pin down what an
operation is, thus the sets of which the axiom claim existence are left
indefinite by the axiom itself. In this respect, the axiom for multisets, in our
language where we have borrowed the connective ${≃}$ from type theory, stating
the existence of multisets of operations is a refinement of collection/fullness
into a definite axiom, namely \emph{exponentiation for operations}.

\subsection{Exponentiation}
\label{multiset-exp}

We define the exponential of two multisets.

\begin{definition}
Let $a , b : M$ be multisets and define

\begin{align*}
 &\operatorname{Exp}␣a␣b : M \\
 &\operatorname{Exp}␣a␣b := \sup␣ (\bar a → \bar b)␣(λf. \sup␣\bar a␣(λi. \langle \tilde a i , \tilde b (f i) \rangle))
\end{align*}
\end{definition}

Next, we formulate the exponentiation axiom for multisets and prove that there exists
a multiset in our model satisfying this axiom.

\begin{proposition}
\begin{align*}
\mathtt{(M-EXP)}& &∏_{a : M}␣∏_{b:M}␣∑_{c:M}␣ ∏_{z:M}␣\left( z ∈ c ≃ \left( \operatorname{Operation}_{a␣b} z\right)\right)
\end{align*}
\end{proposition}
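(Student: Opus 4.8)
The plan is to take $c := \operatorname{Exp}\,a\,b$ and to prove, for every $z : M$, the equivalence $z \in \operatorname{Exp}\,a\,b \simeq \operatorname{Operation}_{a\,b}\,z$. Unfolding the definition of ${\in}$ and using that the universe decodes function codes as function types, the left-hand side is
$$z \in \operatorname{Exp}\,a\,b \simeq \sum_{\varphi : T\bar a \to T\bar b} z =_M \sup\,\bar a\,(\lambda i. \langle \tilde a\,i , \tilde b\,(\varphi\,i) \rangle).$$
So the task reduces to showing that this type of \emph{graph representations} of $z$ is equivalent to the operation predicate. I would attack this by transforming $\operatorname{Operation}_{a\,b}\,z$ into the displayed type by a chain of type-algebra equivalences, in the same spirit as the proof of Theorem \ref{multiset-extensionality}, with the fibrewise Lemma \ref{fibre-equiv} as the central tool.

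First I would use the first conjunct of $\operatorname{Operation}_{a\,b}\,z$—that every occurrence of $z$ is an ordered pair—to reduce statements about pairs in $z$ to statements about projections. Writing $z \equiv \sup\,\bar z\,\tilde z$, the characteristic property of the Wiener ordered pair established above gives $\langle x,y\rangle =_M \tilde z\,k \simeq (x =_M \operatorname{fst}(\tilde z\,k)) \wedge (y =_M \operatorname{snd}(\tilde z\,k))$, and contracting the singleton $\sum_{y} y =_M \operatorname{snd}(\tilde z\,k)$ shows $\sum_{y} \langle x,y\rangle \in z \simeq x \in \operatorname{fst}(z)$, where $\operatorname{fst}(z)$ denotes the multiset of first projections. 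Under this reduction the second conjunct becomes exactly a fibrewise equivalence $\prod_{x} (x \in a \simeq x \in \operatorname{fst}(z))$, i.e.\ $a =_M \operatorname{fst}(z)$ by Theorem \ref{multiset-extensionality}; applying Lemma \ref{fibre-equiv} converts this into the data of an equivalence $\beta : T\bar z \simeq T\bar a$ coherent with the first projections. The third conjunct $\prod_{y}\, y \in b \leftarrow \sum_{x} \langle x,y\rangle \in z$ then supplies, for each occurrence of $z$, a code in $T\bar b$ witnessing that its second projection lies in $b$; composing this assignment with $\beta^{-1}$ yields the map $\varphi : T\bar a \to T\bar b$, and the two coherences assemble into the single equality $z =_M \sup\,\bar a\,(\lambda i. \langle \tilde a\,i, \tilde b\,(\varphi\,i)\rangle)$.

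For the reverse direction I would, given $\varphi$ together with an equality exhibiting $z$ as a graph, verify the three conjuncts directly: that every occurrence is a pair is immediate from the definition of the graph, the domain condition is Lemma \ref{fibre-equiv} read in the opposite direction, and the codomain condition follows since the graph's second projections are literally of the form $\tilde b\,(\varphi\,i) \in b$. The main obstacle is the bookkeeping in the middle step: the three conjuncts are interdependent—the pair structure supplied by the first is needed even to state the projection reductions used by the other two—so the chain must be organised so that each conjunct is rewritten in the presence of the data extracted from the previous ones, and one must check that the $\varphi$ recovered from the codomain condition and the $\beta$ recovered from the domain condition fit together into a genuine inverse of the graph construction. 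Establishing that the two maps are mutually inverse (and not merely logically inverse) is where most of the care lies, and is precisely the point at which phrasing the axiom with ${\simeq}$ rather than ${\leftarrow}$ in the domain clause pays off.
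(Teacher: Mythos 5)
Your proposal is correct and shares the paper's skeleton: the same witness $\operatorname{Exp}\,a\,b$, the same unfolding of $z \in \operatorname{Exp}\,a\,b$ as the type of graph presentations of $z$, and the same key tool, the fibrewise Lemmas \ref{fibre-fun} and \ref{fibre-equiv}. Where you genuinely differ is in the decomposition of the middle step. You rewrite the three conjuncts of $\operatorname{Operation}_{a\,b}\,z$ sequentially and interdependently: the first conjunct yields the projection multisets $\operatorname{fst}(z)$ and $\operatorname{snd}(z)$, the second is routed through Theorem \ref{multiset-extensionality} to become $a =_M \operatorname{fst}(z)$, and the third becomes a coherent map into $T\bar{b}$. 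The paper instead forms the hub type $\sum_{x,y:M}\langle x,y\rangle \in z$ and converts all three conjuncts in parallel: the first into ``the projection to the index of $z$ is an equivalence'', the second into an equivalence from the index of $a$ to the hub with a coherence (Lemma \ref{fibre-equiv}), the third into a map from the hub to $T\bar{b}$ with a coherence (Lemma \ref{fibre-fun}); no reduction consumes another conjunct's data, which is what the paper's heuristic remark about a ``more finely grained equivalent which maps factorwise to both'' refers to, and it is precisely what dissolves the interdependence bookkeeping you flag as your main obstacle. Your route does still go through: the first conjunct is a mere proposition (Wiener pairing is an embedding, by the characteristic property), so rewriting the other two conjuncts under it and then absorbing that propositional factor is legitimate, and invoking extensionality for the domain clause is a pleasant shortcut. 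The trade-off is that your final assembly of $\varphi$ and the mutual-inverse check remain entangled with the earlier rewriting, whereas the paper's parallel treatment isolates all residual verification in its second step — which, to be fair, both you and the paper ultimately dismiss as routine without writing it out.
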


\begin{proof}
Let $c$ be $\operatorname{Exp}␣a␣b$. Thus, we need to prove that for any
given $z : M$, there is an equivalence
$\operatorname{Operation}_{a␣b}␣z ≃ z ∈ \operatorname{Exp}␣a␣b$. We will
give this equivalence in two steps. (A heuristic reason for why we need to
jump through a hoop here is that
$\operatorname{Operation}_{a␣b}␣z$ has three factors while
$z ∈ \operatorname{Exp}␣a␣b$ has two (dependent ones), and we cannot
construct the equivalence factorwise.  We therefore construct a more
finely grained equivalent which maps factorwise to both.)

\textbf{Step 1.} The type $\operatorname{Operation}_{a␣b}␣z$ is equivalent
to the following data:

\begin{itemize}
\item
  $α : \bar a → ∑_{x,y:M} 〈 x , y 〉 ∈ z$
\item
  $β : ∑_{x,y:M} 〈 x , y 〉 ∈ z → \bar b$
\item
  $ε : α ∘ π₀ = \tilde a$, where
  $π₀ : ∑_{x,y:M} 〈 x , y 〉 ∈ z → M$ extracts the $x$ component.
\item
  $δ : β ∘ \tilde b = π₁$, where
  $π₁ : ∑_{x,y:M} 〈 x , y 〉 ∈ z → M$ extracts the $y$ component.
\item
  $π₀∘π₁∘π₁ : ∑_{x,y:M} 〈 x , y 〉 ∈ z → \bar z$, which extracts the
  $z$-index, is an equivalence.
\end{itemize}

This data can be succinctly expressed by the following commutative
diagram:

\begin{align}
\xymatrix{
 \bar a \ar[ddr]_{\tilde a}\ar[rr]^{α}_{≃} & & ∑_{x,y:M}〈x,y〉 ∈ z \ar[ddl]^{π₀}\ar[ddr]_{π₁}\ar[rr]^{β} & & \bar b \ar[ddl]^{\tilde b}\\
 \\
 & M & & M &
}
\end{align}

The type $\operatorname{Operation}_{a␣b}␣z$ is a product of three factors.
The first factor is equivalent to  $π₀∘π₁∘π₁ : ∑_{x,y:M} 〈 x , y 〉 ∈ z → \bar z$ being
an equivalence, since it says that all elements of $z$ are pairs. The second
factor is equivalent to the data $α$ and $ε$ above, by the fibrewise equivalence
lemma (Lemma \ref{fibre-equiv}). Similarly, the third factor is equivalent
to the data $β$ and $δ$. Thus, we conclude that $\operatorname{Operation}_{a␣b}␣z$
is indeed equivalent to the above data.

\textbf{Step 2.} The data given in step 1 is equivalent to 
$z ∈ \operatorname{Exp}␣a␣b$, which is to say that $z$ is
equal to the graph of a map $\bar a → \bar b$.

\begin{align}
\label{pairing-diagram}
\xymatrix{
  ∑_{x,y:M}〈x,y〉 ∈ z\ar[ddr]_{λ(x,y,p) → 〈x,y〉}\ar[rr]^{π₀∘π₁∘π₁} & & \bar z \ar[ddl]^{\tilde z}\\
 \\
 & M &
}
\end{align}

Given the data in step 1, define $f : \bar a → \bar b$ by $f ≔ β ∘ α$.
Since $π₀∘π₁∘π₁ : ∑_{x,y:M} 〈 x , y 〉 ∈ z → \bar z$ is an equivalence,
which makes the diagram (\ref{pairing-diagram}) commute, we derive that

\begin{align}
 z &= \sup␣\bar a␣(λi . 〈π₀␣(α␣i),π₀␣(π₁␣(α␣i))〉)\\
   &= \sup␣\bar a␣(λi . 〈\tilde a␣i,\tilde b ␣(β␣(α␣i))〉) \\
   &= \sup␣\bar a␣(λi . 〈\tilde a␣i,\tilde b ␣(f␣i)〉)
\end{align}

which is precisely that $z$ is the graph of $f$.

In the other direction, assuming that
$z = \sup␣\bar a (λi . 〈\tilde a␣i,\tilde b ␣(f␣i)〉$, we observe
that $π₀∘π₁∘π₁ : ∑_{x,y:M} 〈 x , y 〉 ∈ z → \bar z$ is in fact an equivalence,
and project the equivalence $q : \bar z ≃ \bar a$ from the assumed equality.
We then factor $f$
into $α ≔ (q∘π₀∘π₁∘π₁)^{-1}$ and $β ≔ f∘q∘π₀∘π₁∘π₁$. The fact that the rest of the
equalities of the data hold, follows from the definition of $α$ and $β$
and that diagram (\ref{pairing-diagram}) commutes.

That this construction is an equivalence is (tedious)
routine verification, from which we spare the reader.

In conclusion, combining the above two steps, we have constructed an equivalence 
$\operatorname{Operation}_{a␣b}␣z ≃ z ∈ \operatorname{Exp}␣a␣b$.

\end{proof}

\subsection{Natural numbers}
\label{natural-numbers-multiset}

The natural number axiom is straightforward to translate, and the
construction is exactly the same as in Aczel's model.

Applying the usual abbreviations –
$S␣y␣z ≡ ∀x \left(x ∈ z ↔ \left(x ∈ y ∨ x = y\right)\right)$,
which codes the relation $z$ is the successor of $y$, and
$Z␣z ≡ ∀ x\ \neg x∈z$,
coding $z$ is zero – the axiom of infinity in set theory can be expressed as:

\begin{align*}
\mathtt{(INF)}& & ∃ u ∀ z \left(z ∈ u ↔ \left(Z␣z ∨ ∃y ∈ u \ S␣y␣z \right) \right)
\end{align*}

For multisets we give similar definitions of $S$ and $Z$, in order to
define an axiom of infinitity.

\begin{align*}
S␣y␣z &≔ ∏_{x:M} \left(x ∈ z ≅ \left(x ∈ y + x = y\right)\right) \\
Z␣z &≔ ∏_{x:M}\neg x∈z
\end{align*}

\begin{proposition}
\begin{align*}
\mathtt{(M-INF)}& & ∑_{u : M} ∏_{z:M} \left(z ∈ u ≅ \left(Z␣z + ∑_{y:M} y ∈ u ∧ S␣y␣z \right) \right)
\end{align*}
\end{proposition}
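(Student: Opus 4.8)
The plan is to construct a specific multiset $u$ that will witness the existential, mirroring Aczel's construction of the von Neumann naturals, and then verify the elementhood equivalence. The natural choice is to take $u$ to be the $W$-type element whose index type is $\N$ (the natural numbers) and whose element function $\nu : \N → M$ sends each $n$ to the $n$-th iterative numeral. I would define $\nu$ by recursion: $\nu\,0 := \sup\,0\,(\lambda i. \operatorname{abort}\,i)$ (the empty multiset $∅$), and $\nu\,(n+1) := \operatorname{Succ}\,(\nu\,n)$, where $\operatorname{Succ}\,y$ is the multiset adding $y$ as a fresh element to $y$ itself — concretely $\operatorname{Succ}\,(\sup\,A\,g) := \sup\,(A + 1)\,[g, \lambda *. (\sup\,A\,g)]$, so that $x ∈ \operatorname{Succ}\,y ≃ (x ∈ y) + (x =_M y)$. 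Then $u := \sup\,\N\,\nu$.

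With this $u$ in hand, the core task is to establish, for each $z : M$, the equivalence
\begin{align*}
\left(z ∈ u\right) ≃ \left(Z\,z + ∑_{y:M} y ∈ u ∧ S\,y\,z\right).
\end{align*}
First I would unfold the left side using the definition of $∈$ and Theorem \ref{id-eq-theorem} to rewrite it as $∑_{n:\N} z =_M \nu\,n$, i.e.\ the fibre of $\nu$ over $z$ up to equality. On the right side, I would show that $Z\,z$, which asserts $∏_{x:M} \neg\,x ∈ z$, is equivalent to $z =_M ∅ ≡ z =_M \nu\,0$ — this is essentially extensionality applied against the empty multiset, since having no elements characterises $∅$ uniquely. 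For the successor disjunct, I would show that $∑_{y:M} y ∈ u ∧ S\,y\,z$ is equivalent to $∑_{n:\N} z =_M \nu\,(n+1)$: the predicate $S\,y\,z$, stating $∏_x (x ∈ z ≃ (x ∈ y + x =_M y))$, is precisely the extensional characterisation of $z$ being the successor of $y$, so by the definition of $\operatorname{Succ}$ and extensionality (Theorem \ref{multiset-extensionality}) it forces $z =_M \operatorname{Succ}\,y$, while $y ∈ u$ contributes the index $n$ with $y =_M \nu\,n$.

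Combining these, the right-hand side becomes $(z =_M \nu\,0) + ∑_{n:\N}(z =_M \nu\,(n+1))$, and I would finish by the type-algebra equivalence that recovers the sum over all of $\N$: since $\N ≃ 1 + \N$ via the zero/successor split, we have $∑_{n:\N} z =_M \nu\,n ≃ (z =_M \nu\,0) + ∑_{n:\N} z =_M \nu\,(n+1)$, matching the left-hand side exactly. The main obstacle I anticipate is verifying the successor clause faithfully: I must check that $S\,y\,z$ is not merely logically equivalent but genuinely \emph{equivalent} (as types) to $z =_M \operatorname{Succ}\,y$, and that the disjointness built into $\operatorname{Succ}$ — namely that the fresh copy of $y$ is distinguishable from the elements already in $y$ — does not spoil this. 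This requires care because, as the doubleton example (\ref{doubleton}) warns, naive duplication can inflate occurrence counts; I would need the successor construction's fresh element to be genuinely separate in the index type, which the coproduct $A + 1$ guarantees, so that extensionality yields the equivalence on the nose rather than up to a multiplicity discrepancy.
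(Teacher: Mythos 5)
Your proposal is correct and follows essentially the same route as the paper: the same witness $u = \sup\,ℕ\,N$ with $N\,0 := ∅$ and $N\,(n+1) := N\,n ∪ \{N\,n\}$ (your $\operatorname{Succ}$ via the coproduct index is exactly this union), followed by the same verification splitting $ℕ ≃ 1 + ℕ$ and identifying $Z\,z$ with $z =_M ∅$ and $S\,y\,z$ with $z =_M \operatorname{Succ}\,y$ via extensionality. Your write-up is merely more explicit than the paper's chain of equivalences about the two auxiliary facts it uses silently.
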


\begin{proof}
The construction of a natural number object for $M$ is the same
as the construction for Aczel’s $V$. We define a sequence
of multisets $N : ℕ → M$, by

\begin{align*}
&N␣0 ≔ ∅ \\
&N␣(n+1)≔N␣n ∪ \{N␣n\}
\end{align*}

And let our natural number object be $u = \sup ␣ℕ␣N$. It
just remains to observe that $u$ satisfies the condition of $\mathtt{M-INF}$

\begin{align*}
 z ∈ u &≡ ∑_{n:ℕ} N␣n = z \\
      &≅ z = ∅ ∨ ∑_{n:ℕ} N␣(S n) = z\\
      &≅ z = ∅ ∨ ∑_{n:ℕ} (N␣n ∪ \{N␣n\}) = z\\
      &≅ Z␣z ∨ ∑_{n:ℕ} S␣(N␣n)␣z\\
      &≅ Z␣z ∨ ∑_{y:M} y∈m ∧ S␣y␣z
\end{align*}
\end{proof}

  \section{Homotopic aspects of $M$}

The previous section might seem as though not much have changed going
from the sets $V$ to the multisets $M$. In this subsection we will take a look at
what objects might be in $M$ for which, since we work with the identity type on
$M$, higher homotopies come into play. First of all, we observe that $M$ has
the same number of non-trivial levels of homotopy as $U$ has.

\subsection{Homotopy $n$-type}

Recall from the book ``Homotopy Type Theory''\footcite{hottbook} that 
types can be divided into levels, according to how many times one can
iterate the identity type on the type before it becomes trivial, in 
the sense of being contractible. A type is contractible if it has
an element, which every other element is (uniformly) equal to.
This is captured by the following definitions.

\begin{align}
\operatorname{isContractible}(X) ≔ ∑_{x:X}∏_{x':X}x'=x
\end{align}

\begin{align}
 &\mathtt{is-}(-2)\mathtt{-type}\ X = \operatorname{isContractible}(X) \\
 &\mathtt{is-}(n+1)\mathtt{-type}\ X := ∏_{x,y:X} \mathtt{is-}(n)\mathtt{-type}(\Id␣ x␣ y)
\end{align}

\begin{proposition}
$M$ has the same homotopy $n$-type as $U$.
\end{proposition}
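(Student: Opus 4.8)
The plan is to read the statement as concerning the non-trivial truncation levels: I would prove that for every $n ≥ -1$, the type $M$ is an $n$-type if and only if $U$ is an $n$-type. Only these levels genuinely correspond, and I would flag this explicitly, because at the level of contractibility the equivalence breaks: a contractible $U$ whose unique code decodes to an inhabited type produces an \emph{empty} $M$, so one cannot hope to match the $(-2)$-level. The two implications call for different techniques, so I would treat them separately.

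For the forward direction, assuming $U$ is an $n$-type (with $n ≥ -1$), I would show $M$ is an $n$-type by $W$-induction. Unfolding the definition of an $n$-type, it suffices to prove that $\Id_M\,x\,y$ is an $(n-1)$-type for all $x,y : M$, and I would establish $∏_{y:M}$ ``$\Id_M\,x\,y$ is an $(n-1)$-type'' by induction on $x$. Writing $x ≡ \sup\bar x\,\tilde x$, Lemma \ref{w-id} supplies an equivalence $\Id_M\,x\,y ≃ ∑_{\alpha : \Id_U\,\bar x\,\bar y}\Id\,\tilde x\,(T\alpha \cdot \tilde y)$, and since being an $(n-1)$-type is invariant under equivalence, it is enough to bound this $\Sigma$-type. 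Its base $\Id_U\,\bar x\,\bar y$ is an $(n-1)$-type precisely because $U$ is an $n$-type. Its fibre is an identity type in the function type $T\bar x → M$, which by function extensionality is equivalent to $∏_{i : T\bar x}\Id_M\,(\tilde x\,i)\,((T\alpha \cdot \tilde y)\,i)$; here each factor is an identity type whose left endpoint $\tilde x\,i$ is a child of $x$, hence an $(n-1)$-type by the induction hypothesis. Since $(n-1)$-types are closed under $\Sigma$, under $\Pi$ and under equivalence, $\Id_M\,x\,y$ is an $(n-1)$-type, closing the induction.

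For the converse I would avoid induction and instead exhibit $U$ as a \emph{retract} of $M$. The root-label map $\bar{(-)} : M → U$, sending $\sup a\,f$ to $a$, admits a section $s : U → M$ defined by $s\,a := \sup a\,(\lambda i. ∅)$, where $∅$ is the empty multiset (any fixed inhabitant of $M$ would serve; $∅$ is available because $U$ carries a code for the empty type). Then $\overline{s\,a} ≡ a$, so $U$ is a retract of $M$, and since truncation levels are inherited by retracts, $M$ being an $n$-type forces $U$ to be an $n$-type, for every $n$. Combining this with the forward direction yields the claimed coincidence of levels for all $n ≥ -1$.

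The step I expect to be the main obstacle is the fibre term in the forward induction: one must pass through function extensionality and carry the transport $T\alpha \cdot \tilde y$ along, so as to recognise the resulting factors as identity types between children of $x$, to which the induction hypothesis applies. Everything else is bookkeeping with the standard closure properties of truncation levels, and the converse is essentially immediate once the retraction is spotted. Both univalence (through Lemma \ref{w-id} and the univalent universe $U$) and function extensionality are used freely, and both are available in the present setting.
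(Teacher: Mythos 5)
Your proposal is correct, and your forward direction ($U$ an $n$-type implies $M$ an $n$-type) is essentially the paper's proof: $W$-induction combined with Lemma \ref{w-id} and closure of truncation levels under $\Sigma$; you additionally spell out the function-extensionality step that turns $\Id\,\tilde{x}\,(T\alpha\cdot\tilde{y})$ into a $\Pi$ of identity types whose left endpoints are children of $x$, a step the paper compresses into the loose phrase ``the image of $f$ has homotopy $n$-type'' together with an appeal to Theorem 7.1.8 of the HoTT book, so your version is the more careful one. Where you genuinely differ is the converse. The paper uses the same map $\iota\,a := \sup\,a\,(\lambda i.\emptyset)$ but justifies this direction by asserting that $\iota$ is an \emph{embedding}, so that $U$ inherits the level of $M$ for $n \geq -1$; making that assertion precise itself costs another application of Lemma \ref{w-id} plus univalence, to see that the fibre component over $\Id_U\,a\,b$ is contractible. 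You instead observe that $\iota$ is, definitionally, a \emph{section} of the root map $x \mapsto \bar{x} : M \to U$, so $U$ is a retract of $M$ and retracts inherit every truncation level. This buys you two things: the argument is elementary (no univalence and no characterisation of $\Id_M$ is needed in this direction), and it also covers $n = -2$, where the embedding argument would only yield that $U$ is a proposition. Finally, your caveat that the levels cannot be matched at $-2$ (a contractible $U$ whose code decodes to an inhabited type gives an \emph{empty} $M$) is a genuine sharpening that the paper's statement glosses over; note only that under the paper's standing assumptions on $U$ (codes for $\emptyset$, $1$, $2$, $\mathbb{N}$), neither $U$ nor $M$ can be contractible, so restricting the proposition to $n \geq -1$ loses nothing in that setting.
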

\begin{proof}
If $M$ is homotopy $n$-type, then $U$ is also homotopy $n$-type. This follows from the fact that the following map is an embedding.

\begin{align}
&ι : U → M \\
&ι a := (\sup␣a␣(λ a. ∅))
\end{align}

On the other hand if $U$ has homotopy $n$-type, then we show by $W$-induction on 
$M$ that $M$ also has homotopy $n$-type.

Let $x = (\sup␣a␣f)$ and $y = (\sup␣b␣g)$, and consider $\Id␣ x␣ y$. 
By Lemma \ref{w-id} we know that:

\begin{align}
Id␣ x␣ y ≃ ∑_{α : \Id_U␣a␣b} \Id_{Ta → W} f␣(Bα· g)
\end{align}

From W-induction we have the induction hypothesis that the image of \(f\)
has homotopy \(n\)-type, and by Theorem 7.1.8 in the book\footcite{hottbook}.,
we know that this \(Σ\)-type also has homotopy \(n\)-type,

\end{proof}

\subsection{HITs and multisets}

If our universe has Higher Inductive Types (HITs), we can construct multisets
where the index set is a higher groupoid structure. An interesting fact is that
even if $\bar a$ is a higher groupoid, we can still have that $x ∈ a$ is
$1\text{-type}$ for all $x$ .

\begin{example}
\label{singleton-example}
In Example \ref{doubleton} we saw that the singleton
construction unexpectedly gave
singletons where the single element occurred twice, because of its non-trivial
equalities to itself in $M$. A solution to this is to take the connected
compontent of the element in $M$ as the index set of the singleton, instead of
just $1$, along with the inclusion into $M$. However, this requires the connected
component to be $U$-small, which the usual construction does not guarantee.
Adding that assumption, which we conjecture could hold in general (in homotopical
models), since $M$ is
locally $U$-small, we can construct singletons even for elements of $M$ with
non-trivial self-identities.

For any multiset $x : M$, such that there is $t : U$ which represents the
connected component of $x$, i.e. $α:T␣t ≃ ∑_{y:M}∥x=_My∥_{-1}$, we can
define a the singleton $s␣x␣t␣α := \sup␣t (π₀∘α)$. The map $π₀∘α$ is
an embedding, since $∥x=_My∥_{-1}$ is a mere proposition. It follows that
the fibres $y∈s␣x␣t␣α$ are all propositions, and in particular $x ∈ s␣x␣t␣α$
is contractible.

\end{example}

\printbibliography 

\end{document}